\documentclass[10pt]{amsart}
\usepackage[utf8]{inputenc}
\usepackage{amsfonts}
\usepackage{graphics}
\usepackage[all, cmtip]{xy}
\usepackage{amsthm,amsfonts,amssymb,amsmath,amsxtra}
\usepackage[all]{xy}
\usepackage{amsmath}
\usepackage{flexisym}
\usepackage{mathrsfs}
\usepackage{amsfonts}
\usepackage{amsmath}
\usepackage[colorlinks]{hyperref}
\hypersetup{
  citecolor   = blue %Colour of citations
 }

\usepackage{multicol}
\numberwithin{equation}{subsection}

\newtheorem{Theorem}{Theorem}[section]
\newtheorem{Remark}[Theorem]{Remark}
\newtheorem{Remarks}[Theorem]{Remarks}

\newtheorem{Proposition}[Theorem]{Proposition}
\newtheorem{Corollary}[Theorem]{Corollary}
\newtheorem{Conjecture}[Theorem]{Conjecture}
\newtheorem{Main Theorem}[Theorem]{Main Theorem}

%----------------------------------------------------

\usepackage[colorlinks]{hyperref}
\makeatletter
\renewcommand*\env@matrix[1][*\c@MaxMatrixCols c]{%
  \hskip -\arraycolsep
  \let\@ifnextchar\new@ifnextchar
  \array{#1}}
\makeatother

\newif\ifgrading

%\gradingtrue
\gradingfalse

\newcommand{\RC}[1]{\ifgrading {{\color{black} #1 }} \fi}

\usepackage[colorlinks]{hyperref}
\hypersetup{linkcolor=black}
\usepackage{xcolor}
\usepackage{ wasysym }

\usepackage{tikz-cd}
\newcommand{\U}{{\mathcal U}}
\newcommand{\calG}{{\mathcal G}}
\newcommand{\calO}{{\mathcal O}}
\newcommand{\Mloc}{{\rm M}^{\rm loc}}
\newcommand{\Spec}{{\rm Spec \, } }
\newcommand{\wti}{\widetilde}
\newcommand{\und}{\underline}
\usepackage{xcolor}

\setcounter{tocdepth}{1}

\begin{document}

\title[integral models for unitary Shimura varieties]{Semi-stable models for some unitary Shimura varieties over ramified primes}

\author[I. Zachos]{I. Zachos}
\address{
Dept. of
Mathematics\\
Boston College\\
Chestnut Hill\\
MA 02467\\
USA}
\email{zachosi@bc.edu}

\begin{abstract}
We consider Shimura varieties associated to a unitary group of signature $(n-2,2)$. We give regular $p$-adic integral models for these varieties over odd primes $p$ which ramify in the imaginary quadratic field with level subgroup at $p$ given by the stabilizer of a selfdual lattice in the hermitian space. Our construction is given by an explicit resolution of a corresponding local model.
\end{abstract}

\maketitle

\tableofcontents

$\quad$\\

\section{Introduction}
\subsection{}
This paper is a contribution to the problem of constructing regular integral models for Shimura varieties over places of bad reduction. There are several implicit examples of constructions of such regular integral models in special cases; see, for example, work of de Jong \cite{deJ}, Genestier \cite{Ge}, Pappas \cite{P}, Faltings \cite{Fa} and the very recent work of Pappas with the author \cite{PaZa}. Here, we consider Shimura varieties associated to unitary groups of signature $(r,s)$ over an imaginary quadratic field $F_0$. These Shimura varieties are of PEL type, so they can be written as a moduli space of abelian varieties with polarization, endomorphisms and level structure. Shimura varieties have canonical models over the “reflex” number field $E$. In the cases we consider here the reflex field is the field of rational numbers $\mathbb{Q}$ if $r = s$ and $E = F_0$ otherwise.

Constructing such well-behaved integral models is an interesting and hard problem whose solution has many applications to number theory. The behavior of these depends very much on the “level subgroup”. Here, the level subgroup is the stabilizer of a selfdual lattice in the hermitian space. This stabilizer, by what follows below, is not connected when $n$ is even, so not parahoric. However, by using work of Rapoport-Zink \cite{RZbook} and Pappas \cite{P} we construct $p$-adic integral models, which have simple and explicit moduli descriptions, and are \'etale locally around each point isomorphic to certain simpler schemes the \textit{naive local models}. Inspired by the work of Pappas-Rapoport \cite{PR2} and Krämer \cite{Kr}, we consider a variation of the above moduli problem where we add in the moduli problem an additional subspace in the deRham filtration $ {\rm Fil}^0 (A) \subset H_{dR}^1(A)$ of the universal abelian variety $A$, which satisfies certain conditions. This is essentially an instance of the notion of a ``linear modification" introduced in \cite{P}. We then show that the blow-up of this model along a smooth (non Cartier) divisor produces a semistable integral model of the corresponding Shimura variety, i.e. it is regular and the irreducible components of the special fiber are smooth divisors crossing normally. We expect that our construction will find applications to the study of arithmetic intersections of special cycles and Kudla’s program. (See \cite{Zhang}, \cite{BHKR} and \cite{HLSY} for important applications of integral models of unitary Shimura varieties to number theory.)

\subsection{} To explain our results, we need to introduce some notation. We consider the group $G$ of unitary similitudes for a hermitian vector space $(W, \phi)$ of dimension $n > 3$ over an imaginary quadratic field $F_0 \subset \mathbb{C}$, and fix a conjugacy class of homomorphisms $h : \text{Res}_{\mathbb{C}/\mathbb{R}}\mathbb{G}_{m} \rightarrow G_{\mathbb{R}}$ corresponding to a Shimura datum $(G,X_h)$ of signature $(r,s)=(n-2, 2)$ (see \S \ref{Shimura}). Let us mention here that the case $(r,s)=(1,2)$, when $n=3$, was studied in \cite[4.5, 4.15]{P} (see also \cite[\S 6]{PR}).

We assume that $F_0/ \mathbb{Q}$ is ramified over $p$, where $p$ is an odd prime number. Let $F_1 = F_0 \otimes \mathbb{Q}_p$ and $V = W \otimes_{\mathbb{Q}} \mathbb{Q}_p$. We fix a square root $\pi$ of $p$ and we set $k = \overline{\mathbb{F}_p}$. We assume
that the hermitian form $\phi$ on $V$ is split, i.e that there is a basis $e_1,\dots , e_n$ such that $\phi(e_i, e_{n+1-j}) = \delta_{ij}$ for $ i, j \in \{1,\dots n\}.$

In addition, we denote by $\Lambda$ the standard lattice $O^n_{F_1}$ in $V$ and we let $\mathcal{L}$ be the
self-dual multichain consisting of $\{\pi^k \Lambda\}_{k \in \mathbb{Z}}$. Denote by $K$ the stabilizer of $\Lambda$ in $G(\mathbb{Q}_p)$ and let $\calG$ be the (smooth) group scheme of automorphisms of the polarized chain $\mathcal{L}$ over $\mathbb{Z}_p$ (see \cite[\S 1.5]{PR}). Then $\calG(\mathbb{Z}_p) =K$ and the group scheme $\calG$ has $G\otimes_{\mathbb{Z}_p}\mathbb{Q}_p $ as its generic fiber. It turns out that when $n$ is odd the stabilizer $K$ is a parahoric subgroup. When $n$ is even, $K$ is not a parahoric subgroup since it contains a parahoric subgroup with index 2 and the corresponding parahoric group scheme is its connected component $ K^\circ$; see \cite[\S 1.2]{PR} for more details.  

Choose also a sufficiently small compact open subgroup $K^p$ of the prime-to-$p$ finite adelic points $G({\mathbb A}_{f}^p)$ of $G$ and set $\mathbf{K}=K^pK$. The Shimura variety  ${\rm Sh}_{\mathbf{K}}(G, X)$ with complex points
 \[
 {\rm Sh}_{\mathbf{K}}(G, X)(\mathbb{C})=G(\mathbb{Q})\backslash X\times G({\mathbb A}_{f})/\mathbf{K}
 \]
is of PEL type. We set $\mathcal{O} = O_{E_v}$ where $v$ the unique prime ideal of $E$ above $(p)$. 

Next, we follow \cite[Definition 6.9]{RZbook} to define the moduli scheme $\mathcal{A}^{\rm naive}_{\mathbf{K}}$ over $\mathcal{O}$ whose generic fiber agrees with ${\rm Sh}_{\mathbf{K}}(G, X)$ (see also \S \ref{Shimura}). A point of $\mathcal{A}^{\rm naive}_{\mathbf{K}}$ with values in the $\mathcal{O} $-scheme $S$ is the isomorphism class of the following set of data $(A,\bar{\lambda}, \bar{\eta})$:
\begin{enumerate}
    \item An $\mathcal{L}$-set of abelian varieties $A = \{A_{\Lambda}\}$.
    \item A $\mathbb{Q}$-homogeneous principal polarization $\bar{\lambda}$ of the $\mathcal{L}$-set $A$.
    \item A $K^p$-level structure
    \[
\bar{\eta} : H_1 (A, {\mathbb A}_{f}^p) \simeq W \otimes  {\mathbb A}_{f}^p \, \text{ mod} \, K^p
    \]
which respects the bilinear forms on both sides up to a constant in $({\mathbb A}_{f}^p)^{\times}$ (see loc. cit. for
details).

The set $A$ should satisfy the determinant condition (i) of loc. cit.
\end{enumerate}

For the definitions of the terms employed here we refer to loc.cit., 6.3–6.8 and \cite[\S 3]{P}. The functor $\mathcal{A}^{\rm naive}_{\mathbf{K}}$ is representable by a quasi-projective scheme over $\mathcal{O}$. The moduli scheme $\mathcal{A}^{\rm naive}_{\mathbf{K}}$ is connected to the \textit{naive local model} ${\rm M}^{\rm naive}$, see \S \ref{Prelim} for the explicit definition of ${\rm M}^{\rm naive}$, via the local model diagram  
 \begin{equation}\label{LMdiagram}
\begin{tikzcd}
&\wti{\mathcal{A}}^{\rm naive}_{\mathbf{K}}\arrow[dl, "\pi^{\rm }_{\mathbf{K}}"']\arrow[dr, "q^{\rm }_{\mathbf{K}}"]  & \\
\mathcal{A}^{\rm naive}_{\mathbf{K}} &&  {\rm M}^{\rm naive}
\end{tikzcd}
\end{equation}
where the morphism $\pi^{\rm }_{\mathbf{K}}$ is a $\mathcal{G}$-torsor and $q^{\rm }_{\mathbf{K}}$ is a smooth and $\mathcal{G}$-equivariant morphism (see \S \ref{Shimura}). Equivalently,
using the language of algebraic stacks, there is a relatively representable smooth morphism
 \[ \mathcal{A}^{\rm naive}_{\mathbf{K}} \to [\mathcal{G} \backslash  {\rm M}^{\rm naive}] \]
where the target is the quotient algebraic stack. In particular, since $\calG$ is smooth, the above imply that $\mathcal{A}^{\rm naive}_{\mathbf{K}} $ is \'etale locally isomorphic to ${\rm M}^{\rm naive}$. 

One can now consider a variation of the moduli of abelian schemes $\mathcal{A}^{\rm spl}_{\mathbf{K}} $ over $\Spec O_{F_1}$ where we add in the moduli problem an additional subspace in the Hodge filtration $ {\rm Fil}^0 (A) \subset H_{dR}^1(A)$ of the universal abelian variety $A$ with certain conditions to imitate the definition of the splitting local model $\mathcal{M}$; see \S \ref{un.int.} for the explicit definition of $\mathcal{A}^{\rm spl}_{\mathbf{K}} $ and \S \ref{Prelim} where we define $\mathcal{M}$ for general signature $(r,s)$. (Actually, $\mathcal{M}$ is a generalization of Krämer's local models \cite[Definition 4.1]{Kr}). There is a forgetful morphism
\[
\tau :   \mathcal{A}^{\rm spl}_{\mathbf{K}} \longrightarrow \mathcal{A}^{\rm naive}_{\mathbf{K}} \otimes_{\mathcal{O}} O_{F_1}
\]
defined by forgetting the extra subspace. Moreover,  $\mathcal{A}^{\rm spl}_{\mathbf{K}}$ has the same \'etale local structure as $\mathcal{M}$ and is a linear modification of $\mathcal{A}^{\rm naive}_{\mathbf{K}} \otimes_{\mathcal{O}} O_{F_1}$ in the sense of \cite[\S 2]{P} (see also \cite[\S 15]{PR2}). Therefore, there is a local model diagram for $\mathcal{A}^{\rm spl}_{\mathbf{K}}$ similar to (\ref{LMdiagram}) but with $ {\rm M}^{\rm naive} $ replaced by $ \mathcal{M}$. Note, that there is also a corresponding forgetful morphism
\[
\tau_1 :   \mathcal{M} \longrightarrow  {\rm M}^{\rm naive} \otimes_{\mathcal{O}} O_{F_1}.
\]

In \S \ref{Prelim}, we show that $\tau_1^{-1}(*)$ is isomorphic to the Grassmannian $Gr(2,n)_k$. Here, $*$ is the ``worst point" of ${\rm M}^{\rm naive}$, i.e.  the  unique closed $\mathcal{G}$-orbit supported in the special fiber; see \cite[\S 4]{P} for more details. Under the local model diagram, (see \S \ref{Shimura}), $\tau_1^{-1}(*)$ corresponds to the locus where 
the Hodge filtration ${\rm Fil}^0 (A) $ of the universal abelian scheme $A$ is annihilated by the action of the uniformizer $\pi$. Consider the blow-up $\mathcal{A}^{\rm 
bl}_{\mathbf{K}}$ of $\mathcal{A}^{\rm spl}_{\mathbf{K}}$ along this locus.

\subsection{} The main result of the paper is the following theorem.
\begin{Theorem}
$\mathcal{A}^{\rm bl}_{\mathbf{K}}$ is a semi-stable integral model for the Shimura variety ${\rm Sh}_{\mathbf{K}}(G, X)$.
\end{Theorem}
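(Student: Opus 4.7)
The plan is to reduce the statement to a question about the splitting local model $\mathcal{M}$ via the local model diagram for $\mathcal{A}^{\rm spl}_{\mathbf{K}}$. Since regularity and the condition of having a reduced strict normal crossings special fiber are \'etale local, and since the formation of the blow-up commutes with smooth base change and descends along torsors when the center is $\mathcal{G}$-invariant, the scheme $\mathcal{A}^{\rm bl}_{\mathbf{K}}$ is \'etale locally isomorphic to the blow-up $\mathcal{M}^{\rm bl}$ of $\mathcal{M}$ along the smooth closed subscheme $\tau_1^{-1}(*) \cong Gr(2,n)_k$. It therefore suffices to verify two things: that $\mathcal{M}^{\rm bl}$ is regular with special fiber a reduced divisor whose components are smooth and cross normally, and that the generic fiber of $\mathcal{A}^{\rm bl}_{\mathbf{K}}$ is ${\rm Sh}_{\mathbf{K}}(G,X)$.

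The generic fiber identification is the easiest step. The center of the blow-up is supported in the special fiber, so the blow-up is an isomorphism on generic fibers and $(\mathcal{A}^{\rm bl}_{\mathbf{K}})_F$ agrees with $(\mathcal{A}^{\rm spl}_{\mathbf{K}})_F$. Moreover, the forgetful morphism $\tau$ is an isomorphism in characteristic zero, because on the generic fiber the extra subspace in the Hodge filtration that is added to the moduli problem is uniquely determined by the $\pi$-action (this is standard for linear modifications of this type). Combined with the fact that the generic fiber of $\mathcal{A}^{\rm naive}_{\mathbf{K}}$ is ${\rm Sh}_{\mathbf{K}}(G,X)$ by \cite[Definition 6.9]{RZbook}, this gives the claim.

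For the semistability assertion on $\mathcal{M}^{\rm bl}$ the strategy is an explicit local calculation. Using the moduli description of $\mathcal{M}$ from \S\ref{Prelim}, I would cover a neighborhood of the worst point $*$ by affine charts in which both the defining equations of $\mathcal{M}$ and the ideal cutting out $\tau_1^{-1}(*) \cong Gr(2,n)_k$ can be written explicitly in terms of entries and minors of matrices representing the filtered subspaces. The blow-up can then be computed chart by chart in the standard way: in each chart one exhibits regular parameters, identifies the exceptional divisor together with the strict transform of the old special fiber, and checks that their union forms a strict normal crossings divisor. These local statements patch globally to give the desired semistability on $\mathcal{M}^{\rm bl}$.

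The hard part is the chart-by-chart analysis, and in particular verifying that a single blow-up along $Gr(2,n)_k$ is enough to resolve all the singularities of $\mathcal{M}$. One must show that the non-regular locus of $\mathcal{M}$ lies in $\tau_1^{-1}(*)$, so that blowing up this smooth center produces no leftover singular points on the lower strata of the special fiber, and then must track the combinatorics of the resulting divisor carefully enough to identify its irreducible components and confirm their transversality. The signature $(2,n-2)$ assumption, which forces the Hodge filtration to have rank two and hence makes the extra datum low-dimensional and rigid, is what should make this computation tractable; exploiting it in the explicit charts is the technical heart of the argument.
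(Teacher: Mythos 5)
Your proposal follows essentially the same route as the paper: reduce via the local model diagram to the splitting model $\mathcal{M}$, identify the generic fiber, and compute the blow-up along $\tau_1^{-1}(*)\cong Gr(2,n)_k$ in explicit affine charts. The reduction steps are all correct, but what you defer as ``the technical heart'' is precisely the actual content of the paper's argument --- namely the explicit chart $\U\cong\Spec O_F[Y_2,Z_1]/(Z_1-Z_1^t,\ Z_1(I_2+Y_2^tY_2)-2\pi I_2)$ and the patch-by-patch verification that blowing up the ideal generated by the entries of $Z_1$ yields a regular scheme whose special fiber is a reduced normal crossings divisor with three smooth components --- so your text is an accurate plan rather than a proof. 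One simplification the paper exploits, which you might adopt: since the worst point is the unique closed $\mathcal{G}$-orbit, the $\mathcal{G}$-translates of the single chart $\U$ already cover all of $\mathcal{M}$, so there is no separate need to show that the non-regular locus of $\mathcal{M}$ sits inside $\tau_1^{-1}(*)$ or to analyze lower strata away from the center.
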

Since  blowing-up commutes with \'etale localization and the \'etale local structure of the moduli scheme $\mathcal{A}^{\rm spl}_{\mathbf{K}}$ is controlled by the local structure of the local model $\mathcal{M}$, it is enough to show the above statement for the corresponding local models. In particular, it suffices to prove:
\begin{Theorem}
The blow-up ${\rm M}^{\rm bl}$ of $\mathcal{M} $ along the smooth irreducible component $\tau_1^{-1}(*) $ of its special fiber is regular and has special fiber a divisor with normal crossings.
\end{Theorem}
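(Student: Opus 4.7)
The plan is to reduce the global statement to an étale-local computation at closed points of the special fiber of $\mathcal{M}$, and then to distinguish the cases of points outside and inside the center $\tau_1^{-1}(*)$. Away from $\tau_1^{-1}(*)$ the blow-up morphism ${\rm M}^{\rm bl}\to \mathcal{M}$ is an isomorphism in a neighborhood, so there it suffices that $\mathcal{M}$ itself be regular with special fiber of normal-crossing type. I would verify this by invoking the description of $\mathcal{M}$ as a linear modification of ${\rm M}^{\rm naive}\otimes O_F$: the worst-behaved point of ${\rm M}^{\rm naive}$ is $*$, and adding the splitting subspace, in the spirit of Krämer's construction, does not worsen the étale-local geometry outside $\tau_1^{-1}(*)$. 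In particular $\mathcal{M}$ should already be smooth in codimension one away from $\tau_1^{-1}(*)$, with any remaining singular locus lying inside $\tau_1^{-1}(*)$.

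The heart of the proof is the local analysis at a closed point $x\in\tau_1^{-1}(*)$. I would produce affine coordinates around $x$ by choosing a trivialization of the lattice chain compatible with the standard basis $e_1,\dots,e_n$, and writing the Hodge filtration together with the additional splitting subspace as the image or kernel of explicit matrices whose entries are the free parameters. The equations cutting out $\mathcal{M}$ inside this ambient affine space come from $\pi$-stability of the filtration, isotropy with respect to $\phi$, the determinant (wedge) condition for signature $(2,n-2)$, and the splitting condition on the extra subspace. In coordinates adapted to $x$, I expect the ideal of $\tau_1^{-1}(*)$ to be generated by the matrix entries that record the non-vanishing of $\pi$ on the filtration (in essence Plücker coordinates for the additional $2$-plane), while the equations of $\mathcal{M}$ force certain bilinear expressions in those entries to lie in the ideal $(\pi)$. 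This should produce transverse singularities of $A_1$-type, or more generally of $2\times 2$-determinantal type, concentrated along $\tau_1^{-1}(*)$.

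Next I would compute the blow-up chart-by-chart. On each standard affine chart of the blow-up of the ambient space along the smooth center, one generator of the blow-up ideal becomes the pivot and the remaining generators are replaced by their ratios with it. After substituting into the local equations and simplifying, the resulting local rings should be regular, and the special fiber should be cut out by the product of the strict transform of the horizontal component and the exceptional divisor, each smooth and meeting transversally. Running over all charts and closed points then exhibits ${\rm M}^{\rm bl}$ as regular with special fiber a reduced divisor with simple normal crossings. One finally checks that the special fiber of $\mathcal{M}$ has exactly the expected irreducible components — the horizontal one together with $\tau_1^{-1}(*)$ — so that the blow-up yields precisely the claimed SNC configuration.

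The main obstacle, I expect, is the explicit coordinate calculation at the worst points. In contrast to the signature $(n-1,1)$ case, where Krämer's splitting local model is already semistable, for signature $(2,n-2)$ a residual singularity persists exactly along the Grassmannian component $\tau_1^{-1}(*)\cong Gr(2,n)_k$. Identifying correct étale-local charts, matching the scheme-theoretic ideal of $\tau_1^{-1}(*)$ with the non-smooth locus of $\mathcal{M}$, and verifying that a single blow-up along this smooth center suffices uniformly in $n$ all require delicate bookkeeping of the wedge and hermitian conditions and of the extra subspace in ramified mixed characteristic. Once these charts are in hand, regularity and normal crossings follow from a direct affine blow-up calculation.
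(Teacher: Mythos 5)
Your strategy at points of the center is essentially the paper's: write $(\mathcal{F}_0,\mathcal{F}_1)$ in explicit matrix coordinates near a point of $\tau_1^{-1}(*)$, identify the ideal of the center with the entries recording the failure of $\mathcal{F}_1=t\Lambda$, and compute the blow-up chart by chart. Concretely, the paper shows the chart is $\Spec O_F[Y_2,Z_1]/(Z_1-Z_1^t,\ Z_1(I_2+Y_2^tY_2)-2\pi I_2)$ with the center cut out by the three entries $(a,b,c)$ of the symmetric matrix $Z_1$, and then verifies regularity and reduced normal crossings on the three resulting charts by exhibiting the components $I_1,I_2,I_3$ explicitly and checking $\bar J_1=I_1\cap I_2\cap I_3$. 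So the ``delicate bookkeeping'' you defer is exactly the content of Sections 3--4, and nothing in your outline conflicts with it.

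The genuine gap is in your treatment of points \emph{away} from $\tau_1^{-1}(*)$. There you reduce to the claim that $\mathcal{M}$ is already regular with normal-crossings special fiber off the center, and you justify this only by analogy with Kr\"amer's $(1,n-1)$ case (``adding the splitting subspace does not worsen the \'etale-local geometry''). This is precisely what must be proved, and it is not formal: the special fiber of the chart $\U$ has \emph{three} irreducible components $T_0,T_1,T_2$ (not the two you anticipate), and the middle one $T_1=\{Z_1N=0,\ \mathrm{rank}\,Z_1\le 1,\ \mathrm{rank}\,N\le 1\}$ is singular, so $\mathcal{M}$ is certainly not semi-stable everywhere off nothing --- one has to show that the entire failure of semi-stability (in particular $\mathrm{Sing}(T_1)$) is contained in the center $V(a,b,c)=T_0$, since the blow-up is an isomorphism elsewhere. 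You cannot get this without the same explicit equations you invoke at the center. The paper avoids the two-case split altogether: since the worst point is the unique closed $\calG$-orbit, the $\calG$-translates of the single affine chart $\U$ (a neighborhood of a point of $\tau^{-1}(t\Lambda)$) cover all of $\mathcal{M}$, and by $\calG$-equivariance of $r^{\rm bl}$ the translates of $\U^{\rm bl}$ cover ${\rm M}^{\rm bl}$; the one chart computation, which covers both the exceptional locus and the locus where the blow-up is an isomorphism, then proves the theorem globally. I would either adopt that covering argument or add to your plan an explicit verification that the non-semi-stable locus of $\mathcal{M}$ lies inside $\tau_1^{-1}(*)$.
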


To show the above theorem, we explicitly calculate an affine chart $\U$ of $\mathcal{M}$ in a neighbourhood of $  \tau_1^{-1}(*)$. In fact, we consider a more general situation where we calculate $\U$ for a general signature $(r,s)$ and we show that $\mathcal{G}$-translates of $\U$ cover $\mathcal{M}$.

\begin{Proposition}\label{redeqIntro}
An affine chart $\U \subset \mathcal{M}$ containing a preimage of the worst point is isomorphic to 
\[
\Spec  O_{F_1}[X , Y ]/(X-X^t,\, X\cdot(I_s+Y^t \cdot Y)-2\pi I_s )
\]
where $X,Y$ are of sizes $s\times s$ and $(n-s) \times s $ respectively. 
\end{Proposition}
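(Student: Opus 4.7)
The plan is to compute the chart by unwinding the moduli description of $\mathcal{M}$ from \S\ref{Prelim} in explicit matrix coordinates. Write $\Pi$ for the endomorphism of $\Lambda_S := \Lambda\otimes_{\mathbb{Z}_p}\mathcal{O}_S$ induced by $\pi\in O_F$. An $S$-point of $\mathcal{M}$ is a pair $(\mathcal{F},\mathcal{F}')$ of subbundles of $\Lambda_S$, with $\mathcal{F}$ of rank $n$ satisfying the Rapoport--Zink conditions ($\Pi$-stable, totally isotropic for the pairing induced by $\phi$, and Kottwitz condition of signature $(r,s)$), and $\mathcal{F}'\subset\mathcal{F}$ of rank $r$ with $(\Pi+\pi)\mathcal{F}'=0$ and $(\Pi-\pi)\mathcal{F}\subset\mathcal{F}'$. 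Fix the standard basis $e_1,\dots,e_n$ of $\Lambda$ and set $f_i:=\Pi e_i$. The worst point corresponds to $\mathcal{F}_0=\mathrm{span}(f_1,\dots,f_n)$, and on the standard affine open of $\tau_1^{-1}(*)\cong Gr(r,n)_k$ I take $\mathcal{F}'$ to be the row span of $[\,I_r\,|\,Y^t\,]$ in the basis $(f_1,\dots,f_n)$, which introduces the coordinate matrix $Y$ of size $(n-r)\times r$.

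Next I parametrize $\mathcal{F}$ over this chart. Writing $\mathcal{F}$ as the row span of $[C\,|\,I_n]$ in the basis $(e_1,\dots,e_n,f_1,\dots,f_n)$ of $\Lambda_S$ places us in the standard open chart of the ambient Grassmannian around $\mathcal{F}_0$, and the $O_F$-stability condition $\Pi\mathcal{F}\subset\mathcal{F}$ becomes $C^2=pI_n$. The eigenvalue condition $(\Pi+\pi)\mathcal{F}'=0$ translates to $[\,I_r\,|\,Y^t\,]\,C=-\pi[\,I_r\,|\,Y^t\,]$, and combining this with $C^2=pI_n$ successively pins down the upper-left, upper-right, and lower-right blocks of $C$ in closed form in terms of the lower-left $(n-r)\times r$ block, the matrix $Y$, and $\pi$. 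The isotropy of $\mathcal{F}$ then further constrains this lower-left block, and the net residual freedom is captured by a single symmetric $r\times r$ matrix of coordinates $X$, chosen so as to absorb the non-symmetric part of the isotropy relation. In this way $\U$ sits inside $\Spec O_F[X,Y]$ cut out by whatever moduli relations remain.

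These remaining relations turn out to be exactly the two stated ones. The symmetric piece of the isotropy condition $CH=(CH)^t$, with $H$ the anti-diagonal matrix of $\phi$ in the $(e_i)$ basis, produces after the above simplifications the single relation $X=X^t$. The condition $(\Pi-\pi)\mathcal{F}\subset\mathcal{F}'$, evaluated on the lifts of a complement of $\mathcal{F}'$ in $\mathcal{F}$, yields the matrix identity $X(I_r+Y^tY)=2\pi I_r$: the factor $I_r+Y^tY$ records how the chosen basis of $\mathcal{F}'$ projects back to the $e$-coordinates under $\Pi$, while $2\pi=\pi-(-\pi)$ is the difference of the two $\Pi$-eigenvalues. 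The Kottwitz determinant condition is automatic once both halves of the Krämer condition have been imposed. The principal technical step is the middle reduction: I must choose the definition of $X$ so that the isotropy relation splits cleanly into the symmetry $X=X^t$ on one hand and the relations eliminating the non-symmetric freedom of the lower-left block on the other, and then verify that $(\Pi-\pi)\mathcal{F}\subset\mathcal{F}'$ condenses to the single $r\times r$ identity $X(I_r+Y^tY)=2\pi I_r$ with no spurious extras.
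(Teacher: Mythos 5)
Your chart, your coordinates, and your final answer all match the paper's, and the overall strategy (explicit linear algebra in the standard affine chart around the worst point $t\Lambda$, with $\mathcal{F}'$ normalized so that its coordinate matrix is $[\,I_r\,|\,Y^t\,]$) is the same as the paper's. But the middle of your argument --- which you yourself flag as the principal technical step --- does not go through in the order you propose. The specific false step is the claim that the eigenvalue condition $[\,I_r\,|\,Y^t\,]\,C=-\pi[\,I_r\,|\,Y^t\,]$ together with $C^2=pI_n$ ``pins down \dots the lower-right block of $C$.'' Writing $C$ in blocks, the eigenvalue condition determines $C_{11}$ and $C_{12}$ from $C_{21},C_{22},Y$, and the isotropy (symmetry) condition then expresses $C_{21}$ in terms of $C_{22}$ and $Y$; but $C_{22}$ remains a free symmetric $(n-r)\times(n-r)$ matrix subject only to the quadratic relations coming from $C^2=pI_n$. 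Concretely, on the special fiber with $Y=0$ these three conditions force $C_{11}=C_{12}=C_{21}=0$ and allow any symmetric $C_{22}$ with $C_{22}^2=0$, a positive-dimensional locus once $n-r\ge 2$ (which always holds here), whereas the true fiber of $\U$ over that point is a single point. So at the stage where you want the residual freedom to be a symmetric $r\times r$ matrix $X$, it is in fact an $(n-r)\times(n-r)$ symmetric matrix, and the locus you have cut out is strictly larger than $\U$.

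What repairs this --- and is exactly what the paper does --- is to use the condition $(\Pi-\pi)\mathcal{F}\subset\mathcal{F}'$ (the paper's $(t+\pi)\mathcal{F}_1\subset\mathcal{F}_0$, up to your sign convention) at the \emph{start} rather than the end: it forces the rank-$\le r$ factorization $C-\pi I_n=V\cdot[\,I_r\,|\,Y^t\,]$ for an $n\times r$ matrix $V$, so that all of $C$ is expressed through $(V,Y)$; the isotropy condition then shows that the top $r\times r$ block of $V$ is symmetric and determines the remaining block from it and $Y$ --- this is where the matrix $X$ actually comes from --- and the final relation $X(I_r+Y^tY)=2\pi I_r$ is produced by the eigenvalue condition on $\mathcal{F}'$ (the paper's $(t-\pi)\mathcal{F}_0=0$), not by $(\Pi-\pi)\mathcal{F}\subset\mathcal{F}'$. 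Note also that neither $\Pi$-stability ($C^2=pI_n$) nor the Kottwitz condition needs to be imposed: in this paper's definition of $\mathcal{M}$ they are consequences of the four listed conditions. As written, your outline misattributes which condition yields which relation and leaves the crucial rank reduction unestablished, so it is not yet a proof; reordering the conditions as above turns it into the paper's argument.
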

When $(r,s) = (n-1,1)$, Krämer \cite{Kr} shows that $\U$, and so $\mathcal{M}$, has semi-stable reduction. Therefore, she obtains a semistable integral model for the corresponding Shimura variety. 

When $(r,s) = (n-2,2)$, $\U $ does not have semi-stable reduction anymore and so $\mathcal{M}$ does not give us a resolution. However, we use the explicit description of $\U$ above to calculate the blow-up of $\mathcal{M}$ along the $\mathcal{G}$-invariant smooth subscheme $\tau_1^{-1}(*)$. The blow-up gives a $\calG$-birational projective morphism 
\[\
r^{\rm bl} : {\rm M}^{\rm bl} \rightarrow \mathcal{M}
\]
such that ${\rm M}^{\rm bl}$ is regular and has special fiber a reduced divisor with normal crossings. We quickly see that the corresponding blow-up $\mathcal{A}^{\rm bl}_{\mathbf{K}}$ of the integral model $\mathcal{A}^{\rm spl}_{\mathbf{K}}$ inherits the same nice properties as $ {\rm M}^{\rm bl} $. In fact, there is a local model diagram for $\mathcal{A}^{\rm bl}_{\mathbf{K}}$ similar to (\ref{LMdiagram}) but with $ {\rm M}^{\rm naive} $ replaced by $ {\rm M}^{\rm bl} $. See Theorem \ref{RegLM} for the precise statement about the model $\mathcal{A}^{\rm bl}_{\mathbf{K}}$.

Let us mention here that we can obtain similar results for the Shimura varieties ${\rm Sh}_{\mathbf{K}'}(G, X)$ where $\mathbf{K}' = K^p K^\circ$ (see \S \ref{Shimura}). (Recall that $ K^\circ$ is the parahoric connected component of the stabilizer $K$.) Also, we can apply these results to obtain regular (formal) models of the corresponding Rapoport-Zink spaces.

Let us now explain the lay-out of the paper. In \S \ref{Prelim}, we recall the definitions of certain variants of local models for ramified unitary
groups. In \S \ref{AffineChart}, we give explicit equations that describe the affine chart $\U$ of the \textit{splitting model} $ \mathcal{M}$ for a general signature $(r,s)$ and we also show that $\mathcal{G}$-translates of $\U$ cover $\mathcal{M}$. For the rest of the paper we assume $(r,s)= (n-2,2)$. In \S \ref{BlowUp}, we  construct the semi-stable resolution $\rho: \U^{\rm bl} \rightarrow \U$ of the affine chart $\U$. In \S \ref{Resol}, we show that ${\rm M}^{\rm bl}$ has semi-stable reduction by using the results of \S \ref{BlowUp} and the structure of local models. In \S \ref{Shimura}, we apply the above results to construct regular integral models for the corresponding Shimura varieties. 
\smallskip

{\bf Acknowledgements:} I thank G. Pappas for his valuable comments, insights and corrections on a preliminary version of this article. I also thank B. Howard and K. Madapusi Pera for useful suggestions and the referee for their careful reading that lead to several corrections.

\section{Preliminaries: local models and variants}\label{Prelim}

We use the notation of \cite{P}. We take $F =\mathbb{Q}_p[t]/(t^2 - pu) $ and $O_F = \mathbb{Z}_p[t]/(t^2 - pu)$, where $p$ is an odd prime and $u$ is a unit in $\mathbb{Z}_p$. For $n > 3$, we set $V = F^n$  and denote by $e_i$, $1 \leq  i \leq n$, the
standard $O_F$-generators of the standard lattice $\Lambda = O_F^{n} \subset V.$ Fix a uniformizer $\pi$ of $O_F$ with $\pi^2= p\delta$. Also, since $p \neq 2$, $ \delta = \pi^2/p$ has a square root in
a finite \'etale extension of $\Spec (\mathbb{Z}_p)$. After such a base extension there is
a uniformizer $\pi$ such that $\pi^2= p$. We will assume that we have such a
uniformizer and suppress the notation of the \'etale base extension.

Set $k = \overline{\mathbb{F}_p}$. The uniformizing element $\pi$ induces a $\mathbb{Z}_p$- linear mapping on $\Lambda$ which we denote by $t$. We define a non-degenerate alternating $\mathbb{Q}_p$-bilinear form $\langle\, , \, \rangle : V \times V \rightarrow \mathbb{Q}_p$ given by
$$
\langle e_i
, t e_j \rangle = \delta_{i,j} , \quad \langle e_i
, e_j \rangle = 0, \quad \langle te_i
, te_j \rangle = 0.
$$
The restriction $\langle\, , \, \rangle : O_F^{n} \times O_F^{n} \rightarrow \mathbb{Z}_p$ is a perfect $\mathbb{Z}_p$-bilinear form.  Using the duality isomorphism $\text{Hom}_F (V, F) \cong \text{Hom}_{\mathbb{Q}_p}(V,\mathbb{Q}_p)$ given by
composing with the trace $Tr_{F/\mathbb{Q}_p} :  F \rightarrow \mathbb{Q}_p $ we see, as in \cite[\S 3]{P}, that there exists a unique nondegenerate
hermitian form $\phi : V \times V \rightarrow F$ such that
\[
\langle x, y \rangle = Tr_{F /\mathbb{Q}_p} (\pi^{-1}\phi(x, y)), \quad x, \, y \in V.
\]
We take $G :=GU_n:= GU(\phi)$ and we choose a partition $n = r + s$; we refer to the pair $(r, s)$ as the signature. By replacing $\phi$ by $-\phi$ if needed, we can make sure that $s\leq r$ and so we assume that $s \leq r$ (see \cite[\S 1.1]{PR} for more details). Identifying $G \otimes F \simeq GL_{n,F} \times \mathbb{G}_{m,F}$, we define the cocharacter $ \mu_{r,s}$ as $ (1^{(s)}, 0^{(r)}, 1)$ of $D \times \mathbb{G}_m$, where $D$ is the standard maximal torus of diagonal matrices in $GL_n$; for more details we refer the reader to \cite[\S 3.2]{Sm1}. We denote by $E$ the reflex field of $\{ \mu_{r,s}\}$; then $E = \mathbb{Q}_p$ if $r = s$ and $E = F$ otherwise (see \cite[\S 1.1]{PR}). We set $O := O_E$.

Next, we denote by $K$ the stabilizer of $\Lambda$ in $G(\mathbb{Q}_p)$. We also let $\mathcal{L}$ be the
self-dual multichain consisting of $\{\pi^k \Lambda\}_{k \in \mathbb{Z}}$. Here $\mathcal{G} = \underline{{\rm Aut}}(\mathcal{L})$ is the group scheme over $\mathbb{Z}_p$ with $K = \mathcal{G}(\mathbb{Z}_p)$ the subgroup of $G(\mathbb{Q}_p)$ fixing the lattice chain $\mathcal{L}$. When $n$ is odd, the stabilizer $K$ is a parahoric subgroup but when $n$ is even, $K$ is not a parahoric subgroup since 
it contains a parahoric subgroup with index 2. The corresponding parahoric group scheme is its connected component $ K^\circ$. (See \cite[\S 1.2]{PR} for more details.) 
\smallskip

We briefly recall the definition of certain variants of local models for ramified unitary groups. 

\subsection{Rapoport-Zink local models and variants:} Let ${\rm M}^{\rm naive}$ be the functor which associates to each scheme $S$
over $\Spec O$ the set of subsheaves $\mathcal{F}$ of $O \otimes \mathcal{O}_S $-modules of $ \Lambda \otimes \mathcal{O}_S  $
such that
\begin{enumerate}
    \item $\mathcal{F}$ as an $\mathcal{O}_S $-module is Zariski locally on $S$ a direct summand of rank $n$;
    \item $\mathcal{F}$ is totally isotropic for $\langle \,
, \, \rangle \otimes \mathcal{O}_S$;
    \item (Kottwitz condition) $\text{char}_{t |  \mathcal{F} } (X)= (X + \pi)^r(X - \pi)^s $.
\end{enumerate}
The functor ${\rm M}^{\rm naive}$ is represented by a closed subscheme, which we again
denote ${\rm M}^{\rm naive}$, of $Gr(n, 2n) \otimes \Spec O$; hence ${\rm M}^{\rm naive}$ is a projective $ O$-scheme. (Here we denote by $Gr(n, d)$ the Grassmannian scheme parameterizing locally direct summands of rank $n$ of a free module of rank $d$.) ${\rm M}^{\rm naive}$ is the \textit{naive local model} of  Rapoport-Zink \cite{RZbook}. Also, ${\rm M}^{\rm naive}$ supports an action of $\mathcal{G}$. 
\begin{Proposition}\label{notflat}
a) We have 
\[
{\rm M}^{\rm naive} \otimes_{ O} E \cong Gr(s,n)\otimes E.
\]
In particular, the generic fiber of ${\rm M}^{\rm naive}$ is smooth and geometrically irreducible of dimension $rs$.

b) We have 
\[
\text{dim} \,  {\rm M}^{\rm naive} \otimes_{O} k  \geq \left\{
	\begin{array}{ll}
		n^2/4  & \mbox{if } \, n  \, \mbox{is even} \\
		(n^2-1)/4 &  \mbox{if } \, n  \, \mbox{is odd}.
	\end{array}
\right.
\]
In particular, ${\rm M}^{\rm naive}$ is not flat if $|r -s| > 1$.
\end{Proposition}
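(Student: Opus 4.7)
For part (a), the plan is to diagonalize the action of $t$ after base change. Since $p$ is odd, $2\pi$ is a unit in $O$ and $X^2 - pu = (X - \pi)(X + \pi)$ over $F$; this already holds over $E$ when $E = F$, while for $E = \mathbb{Q}_p$ one first base changes to $F$ (faithfully flat). Then $\Lambda \otimes E$ decomposes into eigenspaces $V_+ \oplus V_-$ under the $O_F$-action, with eigenvalues $\pm \pi$ and each of $E$-rank $n$. Any $O_F \otimes \mathcal{O}_S$-submodule $\mathcal{F}$ inherits this decomposition $\mathcal{F} = \mathcal{F}_+ \oplus \mathcal{F}_-$, and the Kottwitz condition forces $\dim \mathcal{F}_+ = s$ and $\dim \mathcal{F}_- = r$. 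The identity $\phi(tx, y) = -\phi(x, ty)$ (from $\phi$ hermitian) implies that $V_+$ and $V_-$ are maximal isotropic for $\langle\,,\,\rangle$ and pair perfectly, hence $\mathcal{F}$ is isotropic iff $\mathcal{F}_- = \mathcal{F}_+^{\perp}$. This identifies the generic fiber with $Gr(s, n) \cong Gr(r, n)$ over $E$, yielding smoothness, geometric irreducibility, and dimension $rs$.

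For part (b), the plan is to exhibit an explicit closed subscheme of ${\rm M}^{\rm naive} \otimes k$ of dimension $\lfloor n/2 \rfloor \lceil n/2 \rceil$, then compare with the generic fiber dimension $rs$. A key simplification is that $t^2 = 0$ in the special fiber, so $t$ acts nilpotently on any $\mathcal{F}$ and the Kottwitz condition degenerates to the tautological equation $X^n = X^n$. Thus ${\rm M}^{\rm naive} \otimes k$ parameterizes $t$-stable Lagrangians $\mathcal{F}$ in the symplectic space $(\Lambda \otimes k, \langle\,,\,\rangle)$, and $t\Lambda \otimes k$ is itself such a Lagrangian.

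Set $m = \lfloor n/2 \rfloor$ and consider the stratum where $\dim(\mathcal{F} \cap t\Lambda) = n - m$. Put $W := \mathcal{F}/(\mathcal{F} \cap t\Lambda) \subset \Lambda/t\Lambda$ and $W' := \mathcal{F} \cap t\Lambda \subset t\Lambda$. The relation $\langle e_i, te_j\rangle = \delta_{ij}$ gives a perfect duality $\Lambda/t\Lambda \times t\Lambda \to k$ and, via the isomorphism $t\colon \Lambda/t\Lambda \xrightarrow{\sim} t\Lambda$, a standard symmetric form on $k^n$. In these terms, $t$-stability of $\mathcal{F}$ becomes $W \subset W'$, the Lagrangian condition forces $W$ to be isotropic for the symmetric form (so $W \in \mathrm{IGr}(m, n)$) and $W' = W^{\perp}$, and the Lagrangian lifts of $W$ form an affine space of dimension $m(m+1)/2$, coming from the kernel of the antisymmetrization $W^{*} \otimes W^{*} \twoheadrightarrow \wedge^{2} W^{*}$.

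Summing, $\dim \mathrm{IGr}(m, n) + m(m+1)/2 = m(2n - 3m - 1)/2 + m(m+1)/2 = m(n - m)$, which equals $n^2/4$ for even $n$ and $(n^2 - 1)/4$ for odd $n$. Since $rs = r(n-r) \leq \lfloor n/2 \rfloor \lceil n/2 \rceil$ with strict inequality precisely when $|r - s| > 1$, in that range $\dim({\rm M}^{\rm naive} \otimes k) > \dim({\rm M}^{\rm naive} \otimes E)$, contradicting flatness of the projective $O$-scheme. The main obstacle is the affine-space description of the Lagrangian lifts: one must carefully unwind the isotropic condition modulo choice of section, correctly identify the induced symmetric form on $\Lambda/t\Lambda$, and check surjectivity of the antisymmetrization map (where the hypothesis $p$ odd is used).
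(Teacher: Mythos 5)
Your argument is correct, but note that the paper does not actually prove this proposition: its ``proof'' is a pointer to \cite[Proposition 3.8]{P} and \cite[Propositions 2.2, 2.3]{Kr}. What you have written is in effect a self-contained reconstruction of the standard arguments from those references: part (a) via the eigenspace decomposition $\Lambda\otimes E=V_+\oplus V_-$ under $t$, with the Kottwitz condition fixing the ranks $(s,r)$ of $(\mathcal{F}_+,\mathcal{F}_-)$ and the Lagrangian condition giving $\mathcal{F}_-=\mathcal{F}_+^{\perp}$; part (b) via the stratum of the special fiber where $\dim(\mathcal{F}\cap t\Lambda)=n-\lfloor n/2\rfloor$, realized as an affine bundle of rank $m(m+1)/2$ over the isotropic Grassmannian $\mathrm{IGr}(m,n)$ for the symmetric form $(x,y)\mapsto\langle x,ty\rangle$ on $\Lambda/t\Lambda\otimes k$ (symmetric precisely because $t$ is anti-self-adjoint for the alternating form). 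The dimension count $m(2n-3m-1)/2+m(m+1)/2=m(n-m)$ and the final comparison with $rs$ are correct, and this is genuinely more informative than the paper's citation.

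Two small points deserve tightening. First, the claim that over $k$ the Kottwitz condition ``degenerates to the tautological equation $X^n=X^n$'' is not true scheme-theoretically: a square-zero endomorphism of a free module over a non-reduced ring need not have characteristic polynomial $X^n$ (e.g.\ $\mathrm{diag}(\epsilon,0)$ over $k[\epsilon]/(\epsilon^2)$), so the special fiber of ${\rm M}^{\rm naive}$ may be a proper closed subscheme of the scheme of $t$-stable Lagrangians. This is harmless for a dimension lower bound, because the stratum you construct is reduced and over a reduced base a square-zero endomorphism does have characteristic polynomial $X^n$; but you should phrase it as exhibiting a closed subvariety of $({\rm M}^{\rm naive}\otimes k)_{\mathrm{red}}$ rather than as a description of the whole special fiber. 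Second, in part (a) when $r=s$ (so $E=\mathbb{Q}_p$), faithfully flat descent from $F$ immediately gives smoothness, geometric irreducibility and the dimension --- which is all the paper ever uses --- but the literal isomorphism with the split Grassmannian over $E$ requires checking that the descent datum (which swaps $V_+$ and $V_-$ and involves $\mathcal{F}_+\mapsto\mathcal{F}_+^{\perp}$) gives the trivial form; either supply that or state only the geometric conclusions in that case.
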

\begin{proof}
See \cite[Proposition 3.8]{P}, \cite[Proposition 2.2]{Kr} and \cite[Corollary 2.3]{Kr}. 
\end{proof}
The flat closure of $ {\rm M}^{\rm naive} \otimes_{O} E$ in ${\rm M}^{\rm naive}$ is by definition the \textit{local model} $\Mloc$.

In \cite[\S 4]{P}, Pappas introduces  the \textit{wedge local model} ${\rm M}^{\wedge}$, in order to correct the non-flatness problem, by imposing the following additional condition: 
\[
 \wedge^{r+1} (t-\pi | \mathcal{F}) = (0) \quad \text{and} \quad  \wedge^{s+1} (t+\pi | \mathcal{F}) = (0). 
\]
More precisely, ${\rm M}^{\wedge}$ is the closed subscheme of ${\rm M}^{\rm naive}$ that classifies points given by $\mathcal{F}$ which satisfy the wedge condition. The scheme ${\rm M}^{\wedge}$ supports an action of $\mathcal{G}$ and the immersion $i : {\rm M}^{\wedge} \rightarrow
{\rm M}^{\rm naive}$ is $\mathcal{G}$-equivariant. It is easy to see that
\begin{Proposition}
The generic fibers of ${\rm M}^{\rm naive} $ and ${\rm M}^{\wedge} $ coincide, in particular
the generic fiber of ${\rm M}^{\wedge}$ is a smooth, geometrically irreducible variety of
dimension rs.
\end{Proposition}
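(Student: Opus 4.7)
The plan is to show that, over the generic fiber (where $\pi$ is invertible), the two wedge conditions are automatically implied by the Kottwitz condition plus the fact that $t^{2}=pu$ holds on $\Lambda$. Since the inclusion $i:\mathrm{M}^{\wedge}\hookrightarrow \mathrm{M}^{\mathrm{naive}}$ is a closed immersion, this will force $\mathrm{M}^{\wedge}\otimes_{O}E = \mathrm{M}^{\mathrm{naive}}\otimes_{O}E$, and the remaining assertions about smoothness, geometric irreducibility, and dimension then follow immediately from Proposition 2.1(a).

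The key step is the following. Let $S$ be an $E$-scheme and let $\mathcal{F}\subset \Lambda\otimes \mathcal{O}_{S}$ be an $S$-point of $\mathrm{M}^{\mathrm{naive}}$. Since $t^{2}=pu=\pi^{2}$ on $\Lambda\otimes\mathcal{O}_{S}$, the operator $t$ on $\mathcal{F}$ satisfies $(t-\pi)(t+\pi)=0$. Over $E$, the element $2\pi$ is a unit, so the ideals $(t-\pi)$ and $(t+\pi)$ are comaximal in $\mathcal{O}_{S}[t]/(t^{2}-pu)$. A Chinese Remainder Theorem argument (applied Zariski-locally on $S$) yields a canonical decomposition
\[
\mathcal{F} = \mathcal{F}^{+}\oplus \mathcal{F}^{-}, \qquad \mathcal{F}^{\pm} := \ker(t\mp\pi \mid \mathcal{F}),
\]
both summands being locally free $\mathcal{O}_{S}$-modules.

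Now I would invoke the Kottwitz condition: $\mathrm{char}_{t\mid\mathcal{F}}(X)=(X+\pi)^{r}(X-\pi)^{s}$. Since $t$ acts as $\pi$ on $\mathcal{F}^{+}$ and as $-\pi$ on $\mathcal{F}^{-}$, and since $\pi\neq -\pi$ in $\mathcal{O}_{S}$, comparing characteristic polynomials forces $\mathrm{rk}\,\mathcal{F}^{-}=r$ and $\mathrm{rk}\,\mathcal{F}^{+}=s$ (Zariski-locally). Then $(t-\pi)$ kills $\mathcal{F}^{+}$ and acts as the invertible scalar $-2\pi$ on $\mathcal{F}^{-}$, so the image of $(t-\pi)\mid\mathcal{F}$ is the rank-$r$ summand $\mathcal{F}^{-}$. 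Hence $\wedge^{r+1}(t-\pi\mid\mathcal{F})=0$, and symmetrically $\wedge^{s+1}(t+\pi\mid\mathcal{F})=0$. This proves that $\mathcal{F}\in \mathrm{M}^{\wedge}(S)$, so the two generic fibers coincide.

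The main (and only real) obstacle is justifying the eigenspace decomposition over a general $E$-scheme rather than just over a geometric point; this is handled by the comaximality of $(t-\pi)$ and $(t+\pi)$ after inverting $p$, which makes the decomposition functorial in $S$. Once the equality of generic fibers is established, Proposition 2.1(a) gives $\mathrm{M}^{\wedge}\otimes_{O}E \cong Gr(r,n)\otimes E$, which is smooth and geometrically irreducible of dimension $rs$, completing the proof.
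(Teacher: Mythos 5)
Your argument is correct, and it is essentially the standard one. The paper gives no argument of its own here: it simply cites Kr\"amer (Prop.\ 3.4) and Arzdorf (Lemma 1.1), and the proofs in those references run exactly along your lines --- after inverting $p$ the ideals $(t-\pi)$ and $(t+\pi)$ become comaximal in $\mathcal{O}_S[t]/(t^2-pu)$, so $\mathcal{F}$ splits canonically into the $\pm\pi$-eigenspaces of $t$, the Kottwitz condition fixes their ranks at $s$ and $r$, the image of $t\mp\pi$ is then a locally free summand of rank $r$ (resp.\ $s$), and the wedge conditions follow; the remaining assertions are then exactly Proposition 2.1(a). The one point you should flag is the case $r=s$: there $E=\mathbb{Q}_p$ and $\pi\notin E$, so neither the eigenspace decomposition nor the individual wedge conditions literally make sense over an $E$-scheme. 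One fixes this by base changing to $F$ (where your argument applies verbatim) and noting that the two wedge conditions are interchanged by ${\rm Gal}(F/\mathbb{Q}_p)$, so the closed subscheme they cut out descends; this looseness is already present in the paper's formulation of ${\rm M}^{\wedge}$, and it is invisible in the signature $(2,n-2)$, $n\geq 5$, that the paper is ultimately concerned with.
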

\begin{proof}
See  \cite[Proposition 3.4]{Kr} and \cite[Lemma 1.1]{A}. 
\end{proof}
Also, $\Mloc \subset {\rm M}^{\wedge}$ and $\Mloc \otimes E = {\rm M}^{\wedge}\otimes E $. As in \cite[\S 4]{P} and \cite[\S 2.4.2, \S 5.5]{PR}, the worst point of ${\rm M}^{\wedge} $, i.e. the  unique closed $\calG$-orbit which lies in the closure of any other orbit, is given by the $ k$-valued point $\mathcal{F} = t\Lambda \subset \Lambda\otimes k  \cong (k[t]/(t^2))^n$.

It is conjectured in \cite{P} that ${\rm M}^{\wedge}$ is flat for $n \geq 2$ and any signature and so $\Mloc = {\rm M}^{\wedge}$. It has been shown, see \cite[Theorem 4.5]{P}, that this is true for the signature $(n-1,1).$ For more general lattice chains, the wedge condition turns out to be insufficient, see \cite[Remarks. 5.3, 7.4]{PR}. In \cite{PR}, the authors introduced a further refinement of the moduli problem by also adding the so-called ``spin condition" (for more information we refer the reader to \cite{PR}); this will play no role in this paper.

Next, we consider the moduli scheme $\mathcal{M}$ over $O_F$, the \textit{splitting (or Krämer) local model} as in \cite[Remark 14.2]{PR2} and \cite[Definition 4.1]{Kr}, whose points for an $O_F $-scheme $S$ are Zariski locally $\mathcal{O}_S$-direct summands $ \mathcal{F}_0 , \mathcal{F}_1 $ of ranks $s$, $n$ respectively, such that

\begin{enumerate}
    \item $ (0) \subset \mathcal{F}_0 \subset \mathcal{F}_1 \subset \Lambda  \otimes \mathcal{O}_S$;
    \item $\mathcal{F}_1 = \mathcal{F}^{\bot}_1$, i.e. $\mathcal{F}_1$ is totally isotropic for $\langle \,
, \, \rangle \otimes \mathcal{O}_S$;
    \item $(t + \pi) \mathcal{F}_1 \subset \mathcal{F}_0$;
    \item $(t - \pi)\mathcal{F}_0 = (0)$.
\end{enumerate}
The functor is represented by a projective $O_F $-scheme $ \mathcal{M}$. The scheme $ \mathcal{M}$ supports an action of $\calG$ and there is a $\calG$-equivariant morphism 
\[
\tau : \mathcal{M} \rightarrow {\rm M}^{\wedge}\otimes_{O} O_F 
\]
which is given by $(\mathcal{F}_0,\mathcal{F}_1) \mapsto \mathcal{F}_1$ on $S$-valued points. (Indeed, we can easily see, as in \cite[Definition 4.1]{Kr}, that $\tau$ is well defined.)  
\begin{Proposition}\label{SmoothGen.M}
The morphism $\tau  : \mathcal{M} \rightarrow {\rm M}^{\wedge}\otimes_{O} O_F $ induces an isomorphism on the generic fibers.
\end{Proposition}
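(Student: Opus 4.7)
The plan is to exhibit an explicit two-sided inverse to $\tau$ on the generic fibers, built from the canonical splitting of $\Lambda$ into $\pi$-eigenspaces that appears once $\pi$ is inverted. On $\Lambda$ one has $t^2 = \pi^2$, so $(t-\pi)(t+\pi) = 0$, and since $p$ is odd and $\pi^2 = pu$, the element $2\pi$ is invertible in $F$. The polynomials $X-\pi$ and $X+\pi$ are therefore coprime over $F$, and I would first show that for every $F$-algebra $R$ the module $\Lambda \otimes_{\mathbb{Z}_p} R$ decomposes canonically as
\[
\Lambda \otimes_{\mathbb{Z}_p} R = \Lambda^+ \oplus \Lambda^-, \qquad \Lambda^{\pm} := \ker(t \mp \pi) = \operatorname{im}(t \pm \pi),
\]
with both summands locally free of rank $n$.

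Next I would unpack conditions (3) and (4) on the generic fiber. Given $\mathcal{F}_1 \in ({\rm M}^{\wedge} \otimes_O F)(S)$, the canonical decomposition restricts to $\mathcal{F}_1 = \mathcal{F}_1^+ \oplus \mathcal{F}_1^-$, where $\mathcal{F}_1^{\pm} := \mathcal{F}_1 \cap \Lambda^{\pm}$, and the Kottwitz condition on $\mathcal{F}_1$ fixes the ranks of the two summands. For any lift $(\mathcal{F}_0, \mathcal{F}_1) \in \mathcal{M}(S)$, condition (4) places $\mathcal{F}_0$ inside $\Lambda^+$, while condition (3) together with $(t+\pi)\mathcal{F}_1 = 2\pi\,\mathcal{F}_1^+ = \mathcal{F}_1^+$ forces $\mathcal{F}_1^+ \subset \mathcal{F}_0$. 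A rank comparison (matching the rank of $\mathcal{F}_0$ with that of $\mathcal{F}_1^+$, both determined by the signature convention) then forces $\mathcal{F}_0 = (t+\pi)\mathcal{F}_1$, so $\mathcal{F}_0$ is uniquely determined by $\mathcal{F}_1$.

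Conversely, for any $\mathcal{F}_1 \in ({\rm M}^{\wedge} \otimes_O F)(S)$, I would check that the assignment $\mathcal{F}_0 := (t+\pi)\mathcal{F}_1$ does produce an $S$-point of $\mathcal{M}$: the inclusion $\mathcal{F}_0 \subset \mathcal{F}_1$ is clear, condition (3) is tautological, condition (4) follows from $(t-\pi)(t+\pi)=0$, and the local direct summand and rank statements follow from the eigenspace decomposition together with the Kottwitz condition. This produces a functorial two-sided inverse to $\tau$ on generic fibers, hence the desired isomorphism. I do not expect any serious obstacle: the argument is essentially linear algebra over $F$, and the only point requiring real care is the rank bookkeeping — verifying that $\mathcal{F}_1^+$ has the prescribed rank $r$ of $\mathcal{F}_0$ — which amounts to tracing through the Kottwitz/signature convention.
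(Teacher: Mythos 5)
Your argument is correct and is essentially the paper's own proof: the paper simply cites Kr\"amer's Remark 4.2 and the proof of Pappas's Proposition 3.8, whose content is exactly the $\pi$-eigenspace decomposition you describe, with conditions (3) and (4) forcing $\mathcal{F}_0 = (t+\pi)\mathcal{F}_1 = \mathcal{F}_1 \cap \ker(t-\pi)$ and that same assignment providing the inverse section. One small simplification: in the uniqueness direction you can avoid the rank count entirely, since $\mathcal{F}_1^{+} \subset \mathcal{F}_0 \subset \mathcal{F}_1 \cap \Lambda^{+} = \mathcal{F}_1^{+}$ already forces equality, so the Kottwitz/signature bookkeeping (which, as you note, requires care with the sign conventions) is only genuinely needed to check that the proposed section lands in $\mathcal{M}$.
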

\begin{proof}
It follows by \cite[Remark 4.2]{Kr} and the proof of  \cite[Proposition 3.8]{P}. 
\end{proof}
The following discussion appears in \cite{PappasLetter}. Over the special fiber, the condition (4) amounts to $t\mathcal{F}_0 = (0)$. Thus, we have 
\[
(0) \subset \mathcal{F}_0 \subset t \Lambda \otimes k \subset \mathcal{F}^{\bot}_0 \subset \Lambda\otimes k .
\]
Also, we have 
\[
(0) \subset (t^{-1}(\mathcal{F}_0))^{\bot} \subset t\Lambda \otimes k \subset t^{-1}(\mathcal{F}_0) \subset \Lambda \otimes k.
\]
The spaces $t^{-1}(\mathcal{F}_0) , \,\mathcal{F}^{\bot}_0$ have rank $n + s$, $2n - s = n + r$ respectively. Fixing $\mathcal{F}_0 $, the rank of $t^{-1}(\mathcal{F}_0) \cap \mathcal{F}^{\bot}_0 $ influences the dimension of the space of allowable $\mathcal{F}_1$ since
\[
\mathcal{F}_0 + (t^{-1}(\mathcal{F}_0))^{\bot}  \subset \mathcal{F}_1 \subset t^{-1}(\mathcal{F}_0) \cap \mathcal{F}^{\bot}_0.
\]
Note that $\mathcal{F}_0 \subset t \Lambda\otimes k \simeq k^n \otimes \mathcal{O}_S $. Hence, we consider the morphism 
\[
\pi : \mathcal{M}\otimes k \rightarrow Gr(s,n)\otimes k 
\]
given by $ (\mathcal{F}_0,\mathcal{F}_1) \mapsto \mathcal{F}_0.$ This has a section 
\[
\phi : Gr(s,n)\otimes k \rightarrow \mathcal{M}\otimes k
\]
given by $ \mathcal{F}_0 \mapsto (\mathcal{F}_0,\mathcal{F}_1)$ with $\mathcal{F}_1 = t\Lambda \otimes k.$ The image of the section $\phi$ is an irreducible component of $ \mathcal{M}\otimes_O k$ which is the fiber $\tau^{-1} (t\Lambda)$ over the worst point. Hence, $\tau^{-1}(t\Lambda)$ is isomorphic to the Grassmannian $ Gr(s,n)\otimes k  $ of dimension $rs$. Also, observe that $\{ (\mathcal{F}_0, \mathcal{F}_1) \, | \,\text{rank} \, (t^{-1}(\mathcal{F}_0) \cap \mathcal{F}^{\bot}_0) = n \} \subset \tau^{-1}(t\Lambda) $ since when $t^{-1}(\mathcal{F}_0) \cap \mathcal{F}^{\bot}_0$ has rank $n$ then $ t^{-1}(\mathcal{F}_0) \cap \mathcal{F}^{\bot}_0 = t\Lambda$ which gives $\mathcal{F}_1 = t\Lambda.$

However, the morphism $\pi$ has fibers of positive dimension over points of $Gr(s,n)\otimes k $ which correspond to subspaces of $Gr(s,n)\otimes k$ on which the dimension $t^{-1}(\mathcal{F}_0) \cap \mathcal{F}^{\bot}_0$ is more than $n$. Actually, $t^{-1}(\mathcal{F}_0) \cap \mathcal{F}^{\bot}_0$ has maximal dimension, i.e. $ t^{-1}(\mathcal{F}_0) \subset \mathcal{F}_0^{\bot}$, if and only if $\mathcal{F}_0 \subset t \Lambda\otimes k \simeq k^n \otimes \mathcal{O}_S $ is totally isotropic for the (non-degenerate)
symmetric form on $ t \Lambda\otimes k $ which is defined as $\{tv,tw \} := \langle tv,w \rangle$; see the proof of \cite[Theorem 4.5]{Kr} for more details. Denote by $\mathcal{Q}(s,n)$ the smooth closed subscheme of $Gr(s,n)\otimes k $ of dimension $ s(2n-3s-1)/2$ which parametrizes all these isotropic $s$-subspaces $\mathcal{F}_0$ in the $n$-space $k^n \otimes \mathcal{O}_S$. For such $ \mathcal{F}_0 \in \mathcal{Q}(s,n)$ we have that $ t^{-1}(\mathcal{F}_0) \subset \mathcal{F}_0^{\bot}$ and thus the fiber $\pi ^{-1}(\mathcal{F}_0)$ is given by $\mathcal{F}_1$ with $\mathcal{F}_1 = \mathcal{F}^{\bot}_1$ such that
\[
\mathcal{F}_0 \subset  (t^{-1}(\mathcal{F}_0))^{\bot} \subset \mathcal{F}_1 \subset  t^{-1}(\mathcal{F}_0).
\]
We can see that these $ \{\mathcal{F}_1\}$ correspond to Lagrangian subspaces in $t^{-1}(\mathcal{F}_0)/ (t^{-1}(\mathcal{F}_0))^{\bot}$ which have dimension $2s$. This is a smooth $s(s+1)/2$-dimensional scheme which we denote by $L(s,2s)$. From the above discussion we see that $\pi^{-1}(\mathcal{Q}(s,n)) $ is a $L(s,2s) $-bundle over $ \mathcal{Q}(s,n)$ with dimension $rs$. Thus, $\pi^{-1}(\mathcal{Q}(s,n)) $ is an irreducible component of $\mathcal{M}\otimes k $ which intersects with $\tau^{-1} (t\Lambda)$ over $ \mathcal{Q}(s,n) $.

In \cite{Kr}, Kr{\"a}mer shows that $\tau$ defines a resolution of  ${\rm M}^{\wedge}$ in the case that the signature is $ (n-1,1)$. In particular, she proves that $\mathcal{M}$ is regular with special fiber a reduced divisor with simple normal crossings. Also she shows that the special fiber of $\mathcal{M}$ consists of two smooth irreducible components of dimension $n - 1$ --- one of which being isomorphic to $\mathbb{P}^{n-1}_k$ (this corresponds to $\tau^{-1} (t\Lambda)$), and the other one being
a $\mathbb{P}^{1}_k$-bundle over a smooth quadric (this corresponds to $\pi^{-1}(\mathcal{Q}(1,n))$)--- which intersect transversely in a smooth irreducible variety of dimension $n -2$ (this corresponds to $\mathcal{Q}(1,n)$).

\section{An affine chart}\label{AffineChart}

The goal of this section is to write down the equations that define $\mathcal{M}$ in a neighbourhood $\U$ of $(\mathcal{F}_0, t \Lambda)$ for a general signature $(r,s)$; see Proposition \ref{redeq}. We also deduce, see Proposition \ref{2G_translates}, that $\mathcal{G}$-translates of $\mathcal{U} $ cover $\mathcal{M}$. (Recall from \S \ref{Prelim} that $(\mathcal{F}_0, t \Lambda) $ is a point in the fiber of $\tau : \mathcal{M} \rightarrow {\rm M}^{\wedge}\otimes_{O} O_F $ over the worst point.) In order to find the explicit equations that describe $\U$, we use similar arguments as in the proof of  \cite[Theorem 4.5]{Kr}.  In our case we consider: 
\[  
   \mathcal{F}_1 =    \left[\ 
\begin{matrix}[c]
A\\ \hline
I_n  
\end{matrix}\ \right], \quad \mathcal{F}_0 = X =  \left[\ 
\begin{matrix}[c]
X_1\\ \hline
X_2  
\end{matrix}\ \right] 
\]
where $A$ is of size $ n \times n$, $X$ is of size $2n \times s$ and $X_1,X_2$ are of size $n\times s$; with the additional condition that $\mathcal{F}_0$ has rank $s$ and so $X$ has a non-vanishing $s\times s$-minor. We also ask that $(\mathcal{F}_0,\mathcal{F}_1)$ satisfy the following four conditions:
\begin{enumerate}
            \begin{multicols}{2}
            \item $\mathcal{F}_1^{\bot} = \mathcal{F}_1$,  
            \item $ \mathcal{F}_0 \subset \mathcal{F}_1$,

            \columnbreak
            \item $(t - \pi)\mathcal{F}_0 = (0)$,
            
            \item $(t + \pi) \mathcal{F}_1 \subset \mathcal{F}_0$.

            \end{multicols}
        \end{enumerate}
Observe that
\[
M_t = \left[\ 
\begin{matrix}[c|c]
0_n & pI_n  \\ \hline
I_n & 0_n 
\end{matrix}\ \right]
\]
of size $2n \times 2n $ is the matrix giving multiplication by $t$.       
\begin{enumerate}
    \item The condition that $\mathcal{F}_1$ is isotropic translates to
    \[
        A^{t} = A .
    \]
    \item The condition $ \mathcal{F}_0 \subset \mathcal{F}_1$ translates to
\[
 \exists \, Y : X =  \left[\ 
\begin{matrix}[c]
A\\ \hline
I_n  
\end{matrix}\ \right] \cdot Y 
\]
where $Y$ is of size $ n\times s$. Thus, we have 
\[
\left[\ 
\begin{matrix}[c]
X_1\\ \hline
X_2  
\end{matrix}\ \right]  = \left[\ 
\begin{matrix}[c]
A\\ \hline
I_n  
\end{matrix}\ \right] \cdot Y  = \left[\ 
\begin{matrix}[c]
A   Y\\ \hline
I_n     Y
\end{matrix}\ \right] 
\]
and so $ X_1 = AY$ and $X_2 = Y.$ 
\item The condition $(t - \pi)\mathcal{F}_0 = (0)$ is equivalent to 
    \[
       M_t \cdot X =   \left[\ 
\begin{matrix}[c]
\pi X_1\\ \hline
\pi X_2
\end{matrix}\ \right],
    \]  
which amounts to 
\[
  \left[\ 
\begin{matrix}[c]
p X_2\\ \hline
 X_1
\end{matrix}\ \right]= \left[\ 
\begin{matrix}[c]
\pi X_1\\ \hline
\pi X_2
\end{matrix}\ \right].
\]
Thus, $X_1 = \pi X_2$ which translates to $A Y = \pi Y $ by condition $(2)$.  
\item The last condition $(t + \pi) \mathcal{F}_1 \subset \mathcal{F}_0$ translates to 
\[
\exists \, Z  :  M_t \cdot \left[\ 
\begin{matrix}[c]
A   \\ \hline
I_n     
\end{matrix}\ \right]  + \left[\ 
\begin{matrix}[c]
\pi A   \\ \hline
\pi I_n     Y
\end{matrix}\ \right]  = X \cdot Z^t
\]
where $Z$ is of size $n\times s$. This amounts to 
\[
  \left[\ 
\begin{matrix}[c]
p I_n + \pi A    \\ \hline
A + \pi I_n  
\end{matrix}\ \right] =  \left[\ 
\begin{matrix}[c]
X_1 Z^t   \\ \hline
X_2 Z^t
\end{matrix}\ \right] .
\]
From the above we get $A + \pi I_n  = X_2 Z^t$ which by condition (2) translates to $ A = Y Z^t-\pi I_n. $ Thus, $A$ can be expressed in terms of $Y,Z$. In addition, by condition (2) and in particular by the relations $ X_1 = AY$ and $X_2 = Y$ we deduce that the matrix $X$ is given in terms of $Y,Z$. Also from $Y = X_2$ we get that the matrix $Y$ is given in terms of $A,X$. (Below we will also show that $Z$ can be expressed in terms of $A,X$.)  
\end{enumerate}

For later use, we break up the matrices $Y,Z$ into blocks as follows. We write
\[
Y = \left[\ 
\begin{matrix}[c]
Y_1\\ \hline
Y_2  
\end{matrix}\ \right], \quad Z = \left[\ 
\begin{matrix}[c]
Z_1\\ \hline
Z_2  
\end{matrix}\ \right]
\]
where $Y_1,Z_1 $ are of size $s\times s$ and $Y_2, Z_2$ are of size $(n-s) \times s $. Observe from $X_1 = \pi X_2$ that all the entries of $X_1$ are in the maximal ideal and thus a minor involving entries of $X_1$ cannot be a unit. Recall that the matrix $X$ has a non-vanishing $s\times s$-minor and $X_2=Y$ from condition (2). Therefore, we can assume that $Y_1= I_s$ up to a change of basis.

We replace $A$ by $Y Z^t-\pi I_n$. Hence, conditions ($1$) and ($3$) are equivalent to
\begin{eqnarray}
    Z Y^t &=& Y Z^t, \\
    Y Z^t Y &=& 2 \pi Y. 
\end{eqnarray}
Here, we want to mention how we can express $Z$ in terms of $A,X$. From the above we have $Y Z^t=A+ \pi I_n$ and $ Y = \left[\ 
\begin{matrix}[c]
I_s\\ \hline
Y_2  
\end{matrix}\ \right]$ which give $ \left[\ 
\begin{matrix}[c]
Z^t\\ \hline
Y_2 Z^t  
\end{matrix}\ \right] = A+ \pi I_n$. Next, we break the matrices $ A, I_n$ into blocks: $A = \left[\ 
\begin{matrix}[c]
A_1\\ \hline
A_2  
\end{matrix}\ \right], \, I_n = \left[\ 
\begin{matrix}[c]
I_1\\ \hline
I_2  
\end{matrix}\ \right]$ where $A_1,I_1 $ are of size $s\times n$ and $A_2, I_2$ are of size $(n-s) \times n $. Hence, from $ \left[\ 
\begin{matrix}[c]
Z^t\\ \hline
Y_2 Z^t  
\end{matrix}\ \right] = A+ \pi I_n$ we obtain $Z^t = A_1 + \pi I_1$ and thus $ Z = A^t_1 + \pi I^t_1.$

From the above we deduce that an affine neighborhood of $\mathcal{M}$ around $(\mathcal{F}_0, t \Lambda)$ is given by $ \U = \Spec O_F[Y , Z ]/(Y_1 - I_s, \, Z Y^t - Y Z^t,\, Y Z^t Y - 2 \pi Y)$.
    
Our goal in this section is to prove the simplification of
equations given by the following proposition.
\begin{Proposition}\label{redeq}
The affine chart $\U \subset \mathcal{M}$ is isomorphic to 
\[
\Spec  O_F[Y_2 , Z_1 ]/(Z_1-Z^t_1,\, Z_1(I_s + Y^t_2 Y_2)-2\pi I_s ).
\]
\end{Proposition}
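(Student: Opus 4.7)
The plan is to start from the presentation $\U = \Spec O_F[Y,Z]/(Y_1 - I_r,\ ZY^t - YZ^t,\ YZ^tY - 2\pi Y)$ already set up in the section, impose $Y_1 = I_r$ so that $Y = \begin{bmatrix} I_r \\ Y_2 \end{bmatrix}$, and then expand the two matrix identities $ZY^t = YZ^t$ and $YZ^tY = 2\pi Y$ block by block in the variables $Y_2, Z_1, Z_2$. The goal is to show that $Z_2$ is redundant (it is determined by $Y_2$ and $Z_1$), and that the remaining relations collapse to exactly $Z_1 = Z_1^t$ and $Z_1(I_r + Y_2^t Y_2) = 2\pi I_r$.

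First I would compute the four blocks of the symmetry equation $ZY^t - YZ^t = 0$. The $(1,1)$-block forces $Z_1 = Z_1^t$; the $(1,2)$- and $(2,1)$-blocks both give $Z_2 = Y_2 Z_1$, expressing $Z_2$ in terms of the remaining generators; and the $(2,2)$-block $Z_2 Y_2^t = Y_2 Z_2^t$ becomes automatic once $Z_2 = Y_2 Z_1$ and $Z_1 = Z_1^t$ are used. Thus, after eliminating $Z_2$, condition (1) is equivalent to the single relation $Z_1 = Z_1^t$.

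Next I would plug $Y = \begin{bmatrix} I_r \\ Y_2 \end{bmatrix}$, $Z_1 = Z_1^t$ and $Z_2 = Y_2 Z_1$ into the quadratic relation $YZ^tY = 2\pi Y$. A direct computation gives $Z^t Y = Z_1^t + Z_2^t Y_2 = Z_1(I_r + Y_2^t Y_2)$, so
\[
YZ^tY = \begin{bmatrix} Z_1(I_r + Y_2^t Y_2) \\ Y_2 Z_1(I_r + Y_2^t Y_2) \end{bmatrix}, \qquad 2\pi Y = \begin{bmatrix} 2\pi I_r \\ 2\pi Y_2 \end{bmatrix}.
\]
The top-block identity is precisely $Z_1(I_r + Y_2^t Y_2) = 2\pi I_r$, and the bottom block $Y_2 Z_1(I_r + Y_2^t Y_2) = 2\pi Y_2$ is automatic once the top block holds (multiply on the left by $Y_2$). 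Hence the ideal generated by the three matrix relations, after eliminating $Z_2$, is generated exactly by $Z_1 - Z_1^t$ and $Z_1(I_r + Y_2^t Y_2) - 2\pi I_r$, proving the proposition.

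The only conceptual subtlety is justifying the elimination: one must check that projecting away the entries of $Z_2$ is an isomorphism of affine schemes, which is clear because $Z_2 = Y_2 Z_1$ is an explicit polynomial expression in the surviving generators and no new relations on $(Y_2, Z_1)$ arise from the blocks we declared automatic. The rest is bookkeeping of $r \times r$ and $(n-r) \times r$ matrix blocks, so no serious obstacle is expected.
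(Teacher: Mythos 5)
Your proposal is correct and follows essentially the same route as the paper: expand $ZY^t = YZ^t$ in blocks to get $Z_1 = Z_1^t$ and $Z_2 = Y_2 Z_1$, then substitute into $YZ^tY = 2\pi Y$ and observe that only the top block $Z_1(I_r + Y_2^tY_2) = 2\pi I_r$ is a new relation. The only (minor) addition is your explicit remark justifying the elimination of $Z_2$, which the paper leaves implicit.
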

\begin{proof}
From (3.0.1) we get 
\[
\left[\ 
\begin{matrix}[c]
Z_1\\ \hline
Z_2  
\end{matrix}\ \right] \cdot [I_s \, | \, Y^t_2 ] = \left[\ 
\begin{matrix}[c]
I_s\\ \hline
Y^t_2  
\end{matrix}\ \right] \cdot [Z^t_1\, | \, Z^t_2],
\]
which gives 
\[
\left[\ 
\begin{matrix}[c|c]
Z_1 & Z_1 Y^t_2  \\ \hline
Z_2 & Z_2 Y^t_2 
\end{matrix}\ \right] = \left[\ 
\begin{matrix}[c|c]
Z^t_1 & Z^t_2  \\ \hline
Y_2 Z_1 & Y_2 Z^t_2 
\end{matrix}\ \right].
\]
From the above relation we obtain the relations $Z_1 = Z^t_1$ and $Z_2 = Y_2 Z^t_1.$ Thus, $Z_1$ is symmetric and $Z_2$ can be expressed in terms of $Y_2,Z_1.$ 

Next, the relation $(3.0.2)$ amounts to
\[
\left[\ 
\begin{matrix}[c]
I_s \\ \hline
Y_2  
\end{matrix}\ \right] \cdot [Z^t_1 \, | \, Z^t_2 ] \cdot \left[\ 
\begin{matrix}[c]
I_s \\ \hline
Y_2  
\end{matrix}\ \right] = \left[\ 
\begin{matrix}[c]
2\pi I_s \\ \hline
2 \pi Y_2  
\end{matrix}\ \right]
\]
which is equivalent to 
\[
\left[\ 
\begin{matrix}[c]
Z^t_1 + Z^t_2 Y_2 \\ \hline
Y_2 Z^t_1 + Y_2 Z^t_2 Y_2  
\end{matrix}\ \right] = \left[\ 
\begin{matrix}[c]
2\pi I_s \\ \hline
2 \pi Y_2  
\end{matrix}\ \right].
\]
From this we get $Z^t_1 + Z^t_2 Y_2 = 2 \pi I_s $ which translates to $ Z_1(I_s + Y^t_2 Y_2) = 2 \pi I_s$ as $Z^t_1 = Z_1$ and $Z_2 = Y_2 Z^t_1.$ From all the above the proof of the proposition follows.
\end{proof}
As corollaries of the above result we have:
\begin{Corollary}
For $(r,s) = (n-1,1)$, the corresponding affine chart $\U$ will be isomorphic to: 
\[
\U \cong \Spec (   O_F[ (y_{i})_{1 \leq i \leq n }, a ]/(a\sum^{n}_{c=1} y^2_{c}-2\pi,\, y_{1}-1)).
\]\qed
\end{Corollary}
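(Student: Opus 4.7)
The plan is to derive this corollary as a direct specialization of Proposition \ref{redeq} with $r=1$, with essentially no new content beyond careful bookkeeping of the block decomposition from Section \ref{AffineChart}.

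First I would set $r = 1$ in the statement of Proposition \ref{redeq}. With this choice, the block $Z_1$ is a $1 \times 1$ matrix (hence a single variable, which I rename $a$), and the block $Y_2$ is a column vector of size $(n-1) \times 1$, whose entries I rename $y_2, \dots, y_n$. The relation $Z_1 = Z_1^t$ then becomes vacuous, so it can be dropped from the list of defining equations. The matrix identity $Z_1(I_r + Y_2^t Y_2) = 2\pi I_r$ collapses to the single scalar equation
\[
a\bigl(1 + y_2^2 + \cdots + y_n^2\bigr) = 2\pi.
\]

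Next I would reconcile this with the notation of the corollary, which uses variables $(y_i)_{1 \le i \le n}$ subject to $y_1 = 1$. This is purely cosmetic: adjoining a formal variable $y_1$ together with the relation $y_1 - 1 = 0$ is an isomorphism of $O_F$-algebras, and it precisely encodes the normalization $Y_1 = I_r = (1)$ that was imposed at the start of Section \ref{AffineChart}. Under this substitution the above scalar equation becomes $a \sum_{c=1}^{n} y_c^2 = 2\pi$, yielding exactly the presentation in the corollary.

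There is no real obstacle here; the only thing worth double-checking is that the role of $y_1 = 1$ in the corollary indeed matches the normalization $Y_1 = I_r$ used in the derivation of Proposition \ref{redeq}, so that no information has been lost in passing from the $Y,Z$ coordinates to the $Y_2, Z_1$ coordinates and then to the $(y_i, a)$ coordinates. Once this identification is recorded, the corollary follows immediately.
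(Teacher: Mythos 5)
Your proposal is correct and matches the paper's intent exactly: the corollary is stated with no proof beyond a \qed precisely because it is the immediate specialization of Proposition \ref{redeq} to $r=1$, with $Z_1$ becoming the scalar $a$, the symmetry relation becoming vacuous, and the formal variable $y_1$ with relation $y_1-1=0$ encoding the normalization $Y_1=I_r$. Nothing is missing.
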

\begin{Remark}\label{KramerReg}
{\rm For the above signature, Kr{\"a}mer \cite{Kr} shows that $\U$ is regular with special fiber a divisor with simple normal crossings.}  
\end{Remark}
\begin{Corollary}\label{n_2}
For $(r,s) = (n-2,2)$ the corresponding affine chart $\U$ will be isomorphic to: 
\[
\U \cong \Spec(   O_F[ (x_i)_{3\leq i \leq n}, (y_i)_{3\leq i \leq n}, a,b, c ]/ ( Z_1  N - 2\pi I_2))
\]
where 
\begin{equation*}
Z_1 = \begin{pmatrix} 
         a &  b  \\        
          b  & c  \\
         
    \end{pmatrix}, \quad N =\begin{pmatrix} 
    
      1+\sum^n_{i=3} x^2_i   &   \sum^n_{i=3} x_i y_i  \\
          \sum^n_{i=3} x_i y_i   &  1+\sum^n_{i=3} y^2_i    \\
    \end{pmatrix}. \qed 
\end{equation*}
\end{Corollary}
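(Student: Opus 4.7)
The plan is to specialize Proposition \ref{redeq} to the signature $(r,s) = (2, n-2)$ and carry out the bookkeeping in coordinates. First, with $r = 2$, the relation $Z_1 = Z_1^t$ forces $Z_1$ to be a symmetric $2 \times 2$ matrix. I parametrize its three independent entries by the variables $a, b, c$, so that $Z_1 = \begin{pmatrix} a & b \\ b & c \end{pmatrix}$, exactly as in the statement. Modding out by the ideal generated by the single off-diagonal relation $Z_1 - Z_1^t$ then reduces the polynomial ring in the four entries of $Z_1$ cleanly to a polynomial ring in $a, b, c$.

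Next, the block $Y_2$ has size $(n-2) \times 2$, and I label its entries by $x_i$ (first column) and $y_i$ (second column) for $3 \leq i \leq n$. A direct computation of $Y_2^t Y_2$ yields a symmetric $2 \times 2$ matrix whose $(1,1)$, $(2,2)$, and off-diagonal entries are $\sum_{i=3}^{n} x_i^2$, $\sum_{i=3}^{n} y_i^2$, and $\sum_{i=3}^{n} x_i y_i$ respectively. Adding $I_2$ gives the matrix $N$ of the statement. Substituting into the second relation $Z_1(I_r + Y_2^t Y_2) - 2\pi I_r = 0$ of Proposition \ref{redeq} produces precisely the ideal generated by the entries of $Z_1 N - 2 \pi I_2$.

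There is no genuine obstacle here: the corollary is a direct unraveling of Proposition \ref{redeq} in the $r = 2$ case. The only care required is to fix the symmetric parametrization of $Z_1$ and to verify that the four scalar equations packaged in $Z_1 N - 2 \pi I_2$ coincide with the four scalar equations coming from $Z_1(I_2 + Y_2^t Y_2) - 2 \pi I_2$ after the substitutions, which is immediate from matrix multiplication. Combining these observations yields the claimed presentation of $\U$.
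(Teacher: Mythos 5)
Your proposal is correct and matches the paper's treatment: the paper states this corollary with no written proof, regarding it as an immediate specialization of Proposition \ref{redeq} to $r=2$, which is exactly the unraveling you carry out (identifying the symmetric $Z_1$ with $a,b,c$ and computing $I_2+Y_2^tY_2=N$ from the columns $(x_i)$, $(y_i)$ of $Y_2$).
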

\begin{Remark}\label{Gen.Sign.}
{\rm In this case $s=2$, $\U$ does not have semi-stable reduction as one of the irreducible components of the special fiber of $\U$ is not smooth. More precisely, over the special fiber ($\pi =0$) $\U$ has three irreducible components given by 
\[
T_i = \{(Z_1,N)\,| \, Z_1N= 0,\, \text{rank}\,  Z_1 \leq i,\, \text{rank} \, N \leq 2-i\}, \quad \text{for} \quad i = 0,1,2.
\]
We can easily see that $T_0, T_2$ are smooth but $T_1$ is singular. We resolve the singularities of $\U$ in \S 3 by blowing up the irreducible component $T_0$ or in other words by blowing up the ideal $ (Z_1)$ generated by the entries of $Z_1$. Observe from the above and the proof of Proposition \ref{redeq} that $ A= Y \cdot \left[\ 
\begin{matrix}[c]
Z_1\\ \hline
Z_2  
\end{matrix}\ \right]^t $ and $Z_2 = Y_2 Z^t_1$ over the special fiber. Hence, $Z_1 = 0$, i.e. $ a =b=c=0$, gives $A=0$ which corresponds to $\mathcal{F}_1 = t \Lambda$. Thus $T_0 =  \U \cap \tau^{-1}(t\Lambda)$ where $\tau : \mathcal{M} \rightarrow {\rm M}^{\wedge}\otimes_{O} O_F $  (see \S 2.1). 
} 
\end{Remark}
\begin{Remark}\label{F0_In_U}
 {\rm
For general signature $(r,s)$, over the special fiber of $\U$ we have $Z_1=Z^t_1$ and $Z_1(I_s + Y^t_2 Y_2)=0.$ As in Remark \ref{Gen.Sign.}, $Z_1 =0$ gives $A=0$ which corresponds to $\mathcal{F}_1 = t \Lambda$.

Moreover, from the above and the definition of the (non-degenerate) symmetric form $\{\,,\,\}$ on $ t \Lambda\otimes k $ (see \S 2.1) we have $ \{\mathcal{F}_0,\mathcal{F}_0\} =Y^t\cdot Y =I_s + Y^t_2 Y_2  $ since $\mathcal{F}_0 =  \left[\  \begin{matrix}[c] 
X_1\\ \hline
X_2  
\end{matrix}\ \right]  $ where $X_1 = \pi X_2$, $ X_2 = Y$ and  $ Y = \left[\ 
\begin{matrix}[c]
I_s\\ \hline
Y_2  
\end{matrix}\ \right]$.
Thus, from the rank of $ I_s + Y^t_2 Y_2$ we read how isotropic $\mathcal{F}_0$ is with respect to $\{\,,\,\}$. When the rank of the matrix $I_s + Y^t_2 Y_2$ is zero, which actually occurs, we have $ \{\mathcal{F}_0,\mathcal{F}_0\}=0$. 

From all the above, we can easily see that $\U$ contains points  $(\mathcal{F}_0,\mathcal{F}_1)$ where $\mathcal{F}_0 \in \mathcal{Q}(s,n)$ and $\mathcal{F}_1=t\Lambda$. (Recall from \S 2.1 that $\mathcal{Q}(s,n)$ is the closed subscheme of $Gr(s,n)\otimes k $ which contains all the totally isotropic $s$-subspaces $\mathcal{F}_0$ with respect to the symmetric form $\{\,,\,\}$.)
 }
\end{Remark}
\begin{Proposition}\label{2G_translates}
When $s\geq 1,$ $\mathcal{G}$-translates of $\U$ cover $\mathcal{M}$.
\end{Proposition}
\begin{proof}
From \S 2.1, we have the forgetful $\calG$-equivariant morphism $\tau : \mathcal{M} \rightarrow {\rm M}^{\wedge}\otimes_{O} O_F$ given by $(\mathcal{F}_0,\mathcal{F}_1) \mapsto \mathcal{F}_1$. As in \cite[\S 4]{P} and \cite[\S 2.4.2, 5.5]{PR}, the worst
point of ${\rm M}^{\wedge}\otimes_{O} O_F$ is given by the $k$-valued point $t\Lambda$. The reason for this
terminology is that the geometric special fiber of ${\rm M}^{\wedge}\otimes_{O} O_F$ embeds into an appropriate affine flag variety, where it decomposes into unions
of finitely many Schubert cells, and the worst point is the unique closed Schubert
cell. This one point stratum lies in the closure of any other stratum and the
inclusion relations between the Schubert varieties are given by the Bruhat order. From the construction of splitting local models and the above, in order to show that $\mathcal{G}$-translates of $\U$ cover $\mathcal{M}$ it is enough to prove that $\mathcal{G}$-translates of $\U$ cover $\tau^{-1}(t\Lambda)$.  

Recall that $K$ is the stabilizer of $\Lambda$ in $G(\mathbb{Q}_p)$ and $ K^\circ$ is the neutral component of $K$, i.e. the parahoric stabilizer of $\Lambda$. When $n$ is odd $K = K^\circ$ and when $n$ is even $K/ K^\circ \simeq \mathbb{Z}/2\mathbb{Z} $; see \S2. Also, $\calG$ is the smooth group scheme of automorphisms of the polarized chain $\mathcal{L}$ over $\mathbb{Z}_p$ with $\mathcal{G}(\mathbb{Z}_p)=K$ and $\calG^\circ$ is the neutral component of $\calG$ with $\mathcal{G}^\circ(\mathbb{Z}_p)=K^\circ$. 

From \S 2.1, we have that $ \tau^{-1}(t\Lambda) \cong Gr(s,n)\otimes k$ and $\mathcal{G}_k$ acts via its action by reduction to $t\Lambda/t^2\Lambda$. This action factors through the orthogonal group $O(n)_k$ of the symmetric form $\{\,,\,\}$ on $ t \Lambda\otimes k $ and gives the map $ \mathcal{G}_k \rightarrow O(n)_k$. As in \cite[\S 4]{PR3} (see also \cite[\S 3.11]{Tits}), $\calG^\circ_k$ has $SO(n)_k$ as its maximal reductive quotient if $n$ is even and $O(n)_k$ if $n$ is odd via its action by reduction to $t\Lambda/t^2\Lambda$. The maps $ \calG^\circ_k \rightarrow O(n)_k$ and $ \calG^\circ_k \rightarrow SO(n)_k$ are surjective when $n$ is odd and even respectively. Therefore, the map $\mathcal{G}_k \rightarrow O(n)_k$ is always surjective. 

Next, the $O(n)$-action on $Gr(s, n)$ has a finite number of orbits. More precisely, there are $s+1$ orbits $Q(0),Q(1), \cdots,Q(s) $ where $Q(i) = \{ \mathcal{F}_0 \in Gr(s, n) | \text{dim}(rad(\mathcal{F}_0)) \\ = i \}$ (see  \cite[\S 4]{BDE}). For example in the case $s=2$ there are three $O(n)$-orbits: $\mathcal{F}_0$ can either contain no isotropic vectors at all or one isotropic vector or be totally isotropic. Observe that $Q(j)$ is contained in the (Zariski) closure of $Q(i)$ if and only if $ j \geq i$ and $Q(s) = \mathcal{Q}(s,n)$ is the unique closed orbit (see for example \cite[\S 3.1]{BDE} and \cite{ACGH}). Thus, $\mathcal{Q}(s,n)$ is contained in the closure of each orbit $Q(i).$

Lastly, from Remark \ref{F0_In_U} we have that $\U$ contains points $(\mathcal{F}_0,t\Lambda)$ with $\mathcal{F}_0 \in \mathcal{Q}(s,n) $ and so $\U$ contains points from all the orbits. Therefore, from all the above we deduce that the $\mathcal{G}$-translates of $\U$ cover $\tau^{-1}(t\Lambda)$. 
\end{proof}

\begin{Conjecture}
When $s \geq 1$, the scheme $\mathcal{M}$ is flat over $\Spec O_F$, Cohen-Macaulay and normal. It's special fiber is reduced.
\end{Conjecture}
\begin{Remark}
{\rm 
a) By Proposition \ref{2G_translates}, to prove the conjecture, it is enough to show that the affine chart $\mathcal{U}$ has the above properties. More precisely, the hard part of the conjecture is to prove that the special fiber of $ \mathcal{U}$ is reduced and Cohen-Macaulay.

b) For $(r,s) = (n-1,1)$, the conjecture is true as we can see from Remark \ref{KramerReg}.

c) The above conjecture is supported by some computer calculations that we obtained with the help of Macaulay 2. In particular, we verified the conjecture when $(r,s)= (n-2,2)$ where $n = 5,6,7,8,9,10$ for various primes $p > 2$. 
}
\end{Remark}

\section{A Blow-up}\label{BlowUp}
In what follows, we assume $(r,s) = (n-2,2).$ The goal of this section is to find a semi-stable resolution of the affine chart $\U$ (see Corollary \ref{SemStabCo}).

From Corollary \ref{n_2} we have that $\U \cong \Spec B $ where $B$ is the quotient ring 
\[
B = O_F[(x_i)_{3\leq i \leq n}, (y_i)_{3\leq i \leq n},a,b,c]/J
\]
and $ J$ is the ideal generated by the entries of the relation:
\[
\begin{pmatrix} 
         a &  b \\
           b &   c  \\
         
    \end{pmatrix}\begin{pmatrix} 
          Q(\und x)  &    P(\und x, \und y)  \\
            P(\und x, \und y) &  Q(\und y)  \\
         
    \end{pmatrix}=2 \pi \begin{pmatrix} 
         1 & 0  \\
          0 &  1   \\
         
    \end{pmatrix},
\]
where $ \displaystyle Q(\und x) = 1+\sum^n_{i=3} x^2_i $, $\displaystyle Q(\und y) = 1+\sum^n_{i=3} y^2_i $ and $\displaystyle  P(\und x, \und y)= \sum^n_{i=3} x_i y_i.$

Now, we will explicitly calculate the blow-up $\U^{\rm bl}$ of $\text{Spec}(B)$ along the ideal $(a,b,c)$. By Remark \ref{Gen.Sign.}, $\U^{\rm bl}$ is the blow-up of $\U$ along the smooth subscheme $ \U \cap \tau^{-1}(t\Lambda)$. Let $\rho: \U^{\rm bl} \rightarrow \U$ be the blow-up morphism. Define
\[
\U': = \text{Proj}\bigg(  B[t_1,t_2,t_3]/J' \bigg)
\]
where
$$
J' = \bigg( t_1  Q(\und x) - t_3 Q( \und y),  \,\,  t_2  Q(\und y) +  t_1 P(\und x, \und y),\,\, t_2 Q(\und x) + t_3 P(\und x, \und y), 
$$
$$
  at_2 - bt_1, \,\, at_3 - c t_1, \,\, bt_3 - ct_2 \bigg). 
$$
By definition, $\U^{\rm bl}$ is a closed subscheme of the projective $\text{Spec}(B)$-scheme $\U' $ (as the blow-up $\U^{\rm bl}$ may be, a priori, cut out by more equations). In fact, as a result of our
analysis we will see that $\U^{\rm bl} = \U'$.

\begin{Proposition}\label{SemStab}
a) $\U'$ has semi-stable reduction over $O_F$.

b) The closed immersion $\U^{\rm bl} \rightarrow \U'$ is an isomorphism.
\end{Proposition}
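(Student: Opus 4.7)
The plan is to cover $\U'$ by the three standard affine charts $\U'_i := D_+(t_i)$ for $i = 1, 2, 3$, give an explicit complete-intersection presentation in each, and verify semi-stability chart-by-chart via the Jacobian criterion; part (b) will then follow from part (a) combined with a dense-open/reducedness argument. The chart $\U'_3$ is isomorphic to $\U'_1$ via the involution $(a, t_1, \und{x}) \leftrightarrow (c, t_3, \und{y})$, which preserves both $J$ and $J'$, so it is enough to handle $\U'_1$ and $\U'_2$. In $\U'_1$ (set $t_1 = 1$), the blow-up relations give $b = at_2$ and $c = at_3$; substitution into the other generators of $J'$ yields $f_1 := Q(\und{x}) - t_3 Q(\und{y})$ and $f_2 := P + t_2 Q(\und{y})$, rendering the third relation of $J'$ automatic. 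These identities reduce the four matrix equations in $J$ so that two become trivial and the remaining two both collapse into the single surviving equation $g_1 := aQ(\und{y})(t_3 - t_2^2) - 2\pi$. Thus $\U'_1 \cong \Spec O_F[\und{x}, \und{y}, a, t_2, t_3]/(f_1, f_2, g_1)$.

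The $3 \times 3$ Jacobian submatrix with columns $(\partial_{t_3}, \partial_{t_2}, \partial_\pi)$ has determinant $2 Q(\und{y})^2$, giving regularity of $\U'_1$ at all points where $Q(\und{y})$ is a unit. At points where $Q(\und{y}) = 0$ in the residue field, the equations $f_1 = f_2 = 0$ force $Q(\und{x}) = P = 0$ as well; these three constraints exclude the case $(x_i) \parallel (y_i)$, as proportionality combined with $P = 0$ and $Q(\und{y}) = 0$ would force $\und{x} = 0$ and hence $Q(\und{x}) = 1$, contradicting $Q(\und{x}) = 0$. Hence there exist indices $i \neq j$ with $x_i y_j - x_j y_i \neq 0$, and the Jacobian minor in columns $(\partial_{x_i}, \partial_{x_j}, \partial_\pi)$, which equals $-4(x_i y_j - x_j y_i)$, is nonzero; therefore $\U'_1$ is regular everywhere. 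Over the special fiber, $g_1$ factors as $a \cdot Q(\und{y}) \cdot (t_3 - t_2^2)$, displaying three branches. At any point lying on all three, the three factors have linear parts in $\mathfrak{m}/\mathfrak{m}^2$ involving pairwise disjoint sets of ambient variables (namely $a$ itself, $\sum y_i^{(0)} \tilde{y}_i$ from $Q(\und{y})$, and $\tilde{t}_3 - 2 t_2^{(0)} \tilde{t}_2$ from $t_3 - t_2^2$); since the linear parts of $f_1, f_2$ involve only $\tilde{x}_i, \tilde{y}_i$, these three extend to part of a regular system of parameters, so $\pi = \tfrac{1}{2} a Q(\und{y})(t_3 - t_2^2)$ exhibits the special fiber locally as a normal crossings divisor. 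In chart $\U'_2$ (set $t_2 = 1$, $a = bt_1$, $c = bt_3$) an entirely analogous elimination gives defining equations $Q(\und{x}) + t_3 P = 0$, $Q(\und{y}) + t_1 P = 0$, $bP(1 - t_1 t_3) - 2\pi = 0$; the key Jacobian minor has determinant $-2 P^2$, and the degenerate locus $\{P = 0\}$ is treated by the analogous argument.

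For part (b), note that the closed immersion $\U^{\rm bl} \hookrightarrow \U'$ is an isomorphism over the open $\U \setminus T_0$ where $T_0 = V(a, b, c)$, because the projective coordinates $[t_1 : t_2 : t_3] = [a : b : c]$ are uniquely determined there. Since $T_0$ is contained in the special fiber and has codimension at least one in $\U$, its preimage in $\U'$ is also a proper closed subset, so $\U^{\rm bl}$ contains a dense open of $\U'$. By part (a), $\U'$ is regular and hence reduced; and the dense open $\U' \setminus \rho^{-1}(T_0) \cong \U \setminus T_0$ is irreducible (being an open subset of the integral scheme $\U$), which forces $\U'$ itself to be integral. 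Therefore the closed immersion $\U^{\rm bl} \hookrightarrow \U'$, containing the generic point, must be an equality. The main obstacle in the whole program is the regularity computation along the degenerate loci $\{Q(\und{y}) = 0\}$ in $\U'_1$ and $\{P = 0\}$ in $\U'_2$, where the straightforward Jacobian minor vanishes and the combinatorial exclusion argument above is required.
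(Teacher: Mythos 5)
Your proposal is correct and follows the same skeleton as the paper: the same three charts $D_+(t_i)$ (with the same symmetry reducing $t_3=1$ to $t_1=1$), the same eliminations yielding the presentation $(f_1,f_2,g_1)$ with $g_1=aQ(\und y)(t_3-t_2^2)-2\pi$ on the chart $t_1=1$ and its analogue on $t_2=1$, and essentially the same closed-immersion-plus-density argument for (b). Where you diverge is in how semi-stability is certified on each chart. The paper does not apply the Jacobian criterion to the total space; instead it decomposes the special-fiber ideal as $\bar J_1=I_1\cap I_2\cap I_3$ by an explicit ideal-membership computation (this gives reducedness), checks smoothness and transversality of the components $V(I_i)$ by the Jacobi criterion on the special fiber only, observes that each $I_i$ becomes principal on $V(J_1)$, and then invokes Hartl's criterion to conclude regularity. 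Your direct route --- a $3\times 3$ minor containing the column $\partial_\pi g_1=-2$, with the degenerate locus $Q(\und y)=0$ handled via the non-proportionality of $\und x$ and $\und y$ --- is arguably more economical and yields regularity and the local presentation $\pi=\tfrac12\,a\,Q(\und y)(t_3-t_2^2)$ of the normal-crossings divisor in one stroke; the paper's route has the advantage of exhibiting the component ideals explicitly, which it reuses afterwards to describe the exceptional locus of $\rho$.

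Two points need tightening, though neither is a genuine obstruction. First, in the normal-crossings step the ``pairwise disjoint variables'' observation does not by itself show that $a$, $Q(\und y)$, $t_3-t_2^2$ stay independent in the cotangent space of the chart (i.e.\ modulo $df_1$, $df_2$ and $d\pi$): the linear part of $Q(\und y)$ involves the same variables $\tilde y_i$ as those of $f_1,f_2$. You need to reuse the non-proportionality of $\und x^{(0)},\und y^{(0)}$ to check that $\sum y_i\tilde y_i$, $\sum x_i\tilde x_i$ and $\sum(x_i\tilde y_i+y_i\tilde x_i)$ are linearly independent, which then gives the five independent linear forms required. Second, in (b) you assert that $\U'\setminus\rho^{-1}(T_0)$ is dense in $\U'$; this requires ruling out irreducible components of $\U'$ contained in the exceptional locus. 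That follows from your chart presentations: three equations in an affine space of dimension $2(n-2)+4$ over $\mathbb{Z}_p$ force every component of $\U'$ to have dimension at least $2(n-2)+1$, which exceeds the dimension of $\rho^{-1}(T_0)$. With these additions the argument is complete and matches the paper's conclusion.
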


\begin{proof}

There are three affine patches that cover $ \U'$:\\
For $t_1 = 1$ the affine open chart is given by $V(J_1) = \text{Spec}R_1/J_1   
$ where $ R_1 = O_F[(x_i)_{3\leq i \leq n}, (y_i)_{3\leq i \leq n},a,t_2,t_3] $ and 
$$
J_1 = \left( t_2  Q(\und y) +  P(\und x, \und y), \,\,   Q(\und x) - t_3 Q(\und y), \,\, a(  Q(\und x) + t_2P(\und x, \und y)) - 2\pi  \right).
$$
We will show that the scheme $V(J_1)$ has semi-stable reduction over $O_F$. It suffices to prove that $V(J_1)$ is regular and its special fiber is reduced with smooth irreducible components that have smooth intersections with correct dimensions. First we observe that 
$$
J_1 = \left( t_2  Q(\und y) +  P(\und x, \und y), \,\,   Q(\und x) - t_3 Q(\und y), \,\, a( t_3-t^2_2) Q(\und y) - 2\pi  \right).
$$
Over the special fiber ($\pi =0$) we have $ V(\bar{J}_1) = \text{Spec}\bar{R}_1/ \bar{J}_1$ where 
\[ 
\bar{J}_1 = \left ( t_2  Q(\und y) + P(\und x, \und y), \quad Q(\und x) - t_3 Q(\und y) , \quad 
a( t_3-t^2_2) Q(\und y) \right)
\]
and $\bar{R}_1 = k[(x_i)_{3\leq i \leq n}, (y_i)_{3\leq i \leq n},a,t_2,t_3].$ Let $V(I_i) = \text{Spec} \bar{R}_1 /I_i $ of dimension $2(n-2)$, where
\begin{eqnarray*}
  I_1 &=& (a, \quad t_2  Q(\und y) +  P(\und x, \und y), \quad  Q(\und x) - t_3 Q(\und y)   ) \\
  I_2 &=& (t_3-t^2_2,  \quad t_2  Q(\und y) +  P(\und x, \und y), \quad  Q(\und x) - t^2_2 Q(\und y) ) \\
  I_3 &=& ( Q(\und y) , \quad   P(\und x, \und y), \quad  Q(\und x) ).
\end{eqnarray*}
We can easily see that 
\[
V(\bar{J}_1) = V(I_1) \cup V(I_2) \cup V(I_3). 
\]
Using the Jacobi criterion we see that $V(I_i)$ are smooth and that their intersections:
\begin{eqnarray*}
  I_1 + I_2  &=& (a, \quad t_3-t^2_2,  \quad t_2  Q(\und y) +  P(\und x, \und y), \quad  Q(\und x) - t^2_2 Q(\und y) ) \\
  I_1 + I_3 &=& (a,\quad  Q(\und y) , \quad   P(\und x, \und y), \quad  Q(\und x)   ) \\
  I_2 + I_3 &=& ( t_3-t^2_2, \quad  Q(\und y) , \quad   P(\und x, \und y), \quad  Q(\und x) )\\
   I_1 + I_2 + I_3 &=& (a,\quad  t_3-t^2_2, \quad  Q(\und y) , \quad   P(\und x, \und y), \quad  Q(\und x)   )
\end{eqnarray*}
are also smooth and with the correct dimensions. By the above, we get that $V(I_i)$ are the smooth irreducible components of $V(\bar{J}_1)$. 

Now, we prove that the special fiber of $ V(J_1)$ is reduced by showing that 
\[
\bar{J}_1 = I_1 \cap I_2 \cap I_3.
\]
Recall that $\bar{J}_1 = (m_1,m_2, a (t_3-t^2_2) Q(\und y))$ where $m_1 :=t_2  Q(\und y) +  P(\und x, \und y)$ and $m_2 := Q(\und x) - t_3 Q(\und y)$. Clearly, $\bar{J}_1 \subset I_1 \cap I_2 \cap I_3$. Let $g \in I_1 \cap I_2 \cap I_3.$ Thus, $g \in I_1$ and
\[
g = f_1 a + f_2 m_1 +f_3 m_2 \equiv  f_1 a \, \text{mod} \, \bar{J}_1
\]
for $f_i \in \bar{R}_1$. Also, $g \in I_2$ and so $f_1 a \in I_2$. $I_2$ is a prime ideal and $a \notin I_2$. Thus, $f_1 \in I_2$ and 
\[
f_1 = h_1(t_3-t^2_2)+h_2m_1+h_3m_2 \equiv h_1(t_3-t^2_2)  \, \text{mod} \, \bar{J}_1
\]
for $h_i \in \bar{R}_1$. Lastly, $g \in I_3$ and from the above we obtain $ h_1\in I_3$ as $a, (t_3-t^2_2) \notin I_3.$ Thus, 
\[
h_1 = k_1 Q(\und y) +k_2  P(\und x, \und y) +k_3 Q(\und x) \equiv Q(\und y)(k_1  -k_2 t_2  + k_3 t_3)  \, \text{mod} \, \bar{J}_1
\] 
for $k_i \in \bar{R}_1$. Therefore, $ g \equiv a (t_3-t^2_2) Q(\und y)(k_1  -k_2 t_2  + k_3 t_3) \equiv 0  \, \text{mod} \, \bar{J}_1 $ and so $g \in \bar{J}_1.$ Hence, $\bar{J}_1 = I_1 \cap I_2 \cap I_3.$

Next, we can easily see that the ideals $ I_1,I_2,I_3$ are principal over $V(J_1)$. In particular, for $I_1$ we have $I_1 = (a)$, for $I_2$ we have $I_2 = (t_3-t^2)$ and for $I_3$ we get $I_3 = (Q(\und y))$. From the above we deduce that $V(J_1)$ is regular (see \cite[Remark 1.1.1]{Ha}).

From all the above discussion we deduce that $V(J_1)$ has semi-stable reduction over $O$. By symmetry, we get similar results for $t_3=1$.\\
$\quad$\\
For $t_2=1$, the affine open chart is given by $V(J_2) = \text{Spec}R_2/J_2$ where $ R_2  = O_F[(x_i)_{3\leq i \leq n}, (y_i)_{3\leq i \leq n},b,t_1,t_3] $ and
$$
J_2 = \left(   Q(\und y) +  t_1 P(\und x, \und y), \,\,   Q(\und x) + t_3 P(\und x, \und y), \,\, b( 1- t_1t_3)P(\und x, \und y)- 2\pi  \right).
$$
To show that $V(J_2) $ has semi-stable reduction one proceeds exactly as above. In this case, the special fiber of $V(J_2)$ is isomorphic to $  \text{Spec}\bar{R}_2/ \bar{J}_2$ where
\[
\bar{J}_2 = \left ( Q(\und y) + t_1P(\und x, \und y), \quad
 Q(\und x) + t_3P(\und x, \und y) , \quad  
b( 1- t_1t_3)P(\und x, \und y) \right)
\]
and $\bar{R}_2  = k[(x_i)_{3\leq i \leq n}, (y_i)_{3\leq i \leq n},b,t_1,t_3].$ Let $V(I'_i) = \text{Spec} \bar{R}_2/I'_i  $ of dimension $2(n-2)$, where
\begin{eqnarray*}
  I'_1 &=& (b, \quad  Q(\und y) + t_1 P(\und x, \und y), \quad  Q(\und x) + t_3P(\und x, \und y)  ) \\
  I'_2 &=& (1- t_1t_3,  \quad  Q(\und y) + t_1 P(\und x, \und y), \quad  Q(\und x) + t_3P(\und x, \und y)) \\
  I'_3 &=& (  P(\und x, \und y) , \quad Q(\und y) , \quad  Q(\und x) ).
\end{eqnarray*} 
and their intersections 
\begin{eqnarray*}
  I'_1 + I'_2  &=& (b, \quad 1- t_1t_3,  \quad  Q(\und y) + t_1 P(\und x, \und y), \quad  Q(\und x) + t_3 P(\und x, \und y) ) \\
  I'_1 + I'_3 &=& (b,\quad   Q(\und y) , \quad   P(\und x, \und y), \quad  Q(\und x)   ) \\
  I'_2 + I'_3 &=& ( 1- t_1t_3, \quad  Q(\und y) , \quad   P(\und x, \und y), \quad  Q(\und x) )\\
  I'_1 + I'_2 + I'_3 &=& (b, \quad  1- t_1t_3,\quad  Q(\und y) , \quad   P(\und x, \und y), \quad  Q(\und x)   ).
\end{eqnarray*}
As in the case $t_1 =1$, by using the Jacobi criterion we see that the irreducible components $V(I'_i)$ are smooth and they intersect transversely. Also, by a similar argument as above we can easily see that $ V(J_2) $ is regular and its special fiber is reduced. Now, the semi-stability of $V(J_2)$ follows. 

By the above, we conclude that $\U'$ is regular, of relative dimension $2(n-2)$, that $\U' $ is $O_F$-flat and that its special fiber is a reduced divisor with normal crossings. This shows part (a). Let us show part (b). The blow-up $ \U^{\rm bl}$ is a closed subscheme of $\U'$. By the above, $ \U'$ is integral of
dimension $ 2(n-2)$. However, the dimension of the blow-up $\U^{\rm bl} $ is also $2(n-2)$. Indeed, on one hand $\U^{\rm bl} $ is a closed subscheme of $\U'$ while on the other hand it is birational to $\text{Spec}(B)$. We deduce that $ \U^{\rm bl}= \U'$ which is the claim in (b).
\end{proof}
As a consequence of the above proposition we obtain:
\begin{Corollary}\label{SemStabCo}
The morphism $\rho: \U^{\rm bl} \rightarrow \U$ is a semistable resolution, i.e. $\U^{\rm bl} $ has semi-stable reduction over $O_F$.
\end{Corollary}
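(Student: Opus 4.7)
The corollary is essentially a repackaging of Proposition \ref{SemStab}, so the plan is to combine its two parts directly. By part (b), the closed immersion $\U^{\rm bl} \hookrightarrow \U'$ is an isomorphism, so $\U^{\rm bl}$ inherits all the properties of $\U'$ as an $O_F$-scheme. By part (a), $\U'$ is regular, flat over $O_F$ of relative dimension $2(n-2)$, and has special fiber a reduced divisor with normal crossings — precisely the conditions defining semi-stable reduction over $O_F$. Transporting these properties across the identification $\U^{\rm bl} = \U'$ immediately yields that $\U^{\rm bl}$ has semi-stable reduction.

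To conclude that $\rho$ deserves to be called a \emph{semistable resolution}, I would record the remaining easy observations: $\rho$ is proper (in fact projective) since it is the blow-up of the coherent ideal $(a,b,c)$ on $\U$, and $\rho$ is birational because $(a,b,c)$ cuts out a proper closed subscheme of the integral scheme $\U$ — indeed the locus $a=b=c=0$ is contained in the component $T_0$ of the special fiber identified in Remark \ref{Gen.Sign.}, so $\rho$ is an isomorphism over its complement. Combined with the regularity of $\U^{\rm bl}$ established above, this gives exactly what a semistable resolution requires.

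There is no genuine obstacle: all of the computational work (the patchwise regularity via the Jacobi criterion, the reducedness check using the primary-style decomposition $\bar{J}_1 = I_1 \cap I_2 \cap I_3$, and the verification that each component of the special fiber is locally principal) has already been carried out in the proof of Proposition \ref{SemStab}. The only pitfall to be mindful of is simply making the logical chain explicit, namely that semi-stable reduction of $\U'$ plus the isomorphism $\U^{\rm bl} \cong \U'$ together with the birational-and-proper character of $\rho$ is exactly the statement of the corollary.
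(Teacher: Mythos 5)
Your argument is correct and is essentially the paper's own proof: the corollary is obtained by combining parts (a) and (b) of Proposition \ref{SemStab}, transporting the semi-stability of $\U'$ to $\U^{\rm bl}$ via the isomorphism of part (b). The additional remarks on properness and birationality of $\rho$ are harmless but not needed beyond what the cited proposition already provides.
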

\begin{proof}
It follows from part (a) and (b) of Proposition \ref{SemStab}.
\end{proof}

\RC{The last affine chart that we have to check is when $\mathbf{t_4=1}$. We get $a = t_1 \pi$, $b = t_2 \pi $, $c=t_3 \pi$ and so the affine chart is given by:
$$
T_4 = \text{Spec}\bigg(O[\overline{x}, \overline{y},t_1,t_2,t_3]/J_4   \bigg)
$$
where
$$
J_4 = \bigg( \begin{pmatrix} 
        t_1 &  t_2 \\
           t_2 &   t_3  \\
         
    \end{pmatrix}\begin{pmatrix} 
          Q(\und x)_{i}  &    \sum' x_{i}y_{i}  \\
            \sum' x_{i}y_{i} &    Q(\und y)_{i}  \\
         
    \end{pmatrix}-2  I_2 \bigg).
 $$
Let $t := t_1 t_3-t^2_2 $ and $B_4 = O[\overline{x}, \overline{y},t_1,t_2,t_3]/J_4 $. The blow-up of $T_4$ along $(t)$ is equal with $ \text{Spec}(B_4/J_{\text{an}})$ where 
$$
J_{\text{an}} = \text{Ker}( B_4 \rightarrow (B_4)_t).
$$
We obtain that 
$$ 
B_4/J_{\text{an}} \cong O[\overline{x},\overline{y},t_1,t_2,t_3]/\bigg((t_1t_3 -t^2_2)\sum' x_iy_i+\frac{1}{2}t_2,\,(t_1t_3 -t^2_2) Q(\und x)-\frac{1}{2}t_3,\,(t_1t_3 -t^2_2) Q(\und y)-\frac{1}{2}t_1 \bigg).
$$
$\text{Spec}(B_4/J_{\text{an}} )$ is smooth and irreducible.
}
\begin{Remark}
{\rm From the proof of Proposition \ref{SemStab} we obtain that the special fiber of $\U^{\rm bl} $ has three irreducible components. In fact, we
explicitly describe the equations defining these irreducible components over the three affine patches that cover $\U^{\rm bl}$. It is then easy to see that the exceptional locus of $\rho: \U^{\rm bl} \rightarrow \U$ is the irreducible component of the special fiber of $\U^{\rm bl}$
\[
\text{Proj}\left(\frac{k[(x_i)_{3\leq i \leq n}, (y_i)_{3\leq i \leq n}][t_1,t_2,t_3]}{(t_1  Q(\und x) - t_3 Q( \und y),  \,  t_2  Q(\und y) +  t_1 P(\und x, \und y),\, t_2 Q(\und x) + t_3 P(\und x, \und y)) } \right)
\]
that corresponds to $V(I_1)$ and $V(I'_1)$ for the affine patches $t_1=1$ and $t_2=1$ respectively.}
\end{Remark}

\section{A resolution for the local model}\label{Resol}
We use the notation from \S \ref{Prelim}. In particular, recall the morphism 
\[\tau : \mathcal{M} \rightarrow {\rm M}^{\wedge}\otimes_O O_F\]
and the following isomorphisms over the generic fiber
\begin{equation}\label{GenFib}
\mathcal{M}\otimes F \cong  {\rm M}^{\wedge} \otimes F \cong  \Mloc \otimes F.
\end{equation}

Let $\mathcal{Z} = \tau^{-1}(t\Lambda)$ be the smooth $\mathcal{G}$-invariant subscheme of dimension $2(n-2)$, which is supported in the special fiber. (Recall from \S \ref{Prelim} that $t\Lambda$ is the worst point of ${\rm M}^{\wedge}$ and $ \tau^{-1}(t\Lambda) \cong Gr(2,n)\otimes k$.) We consider the blow-up of $\mathcal{M}$ along the subscheme $\mathcal{Z}$. This gives a $\mathcal{G}$-birational projective morphism 
\[
r^{\rm bl} : {\rm M}^{\rm bl} \rightarrow \mathcal{M}
\]
 which induces an isomorphism on the generic fibers. 
\begin{Theorem}\label{MSemi}
The scheme ${\rm M}^{\rm bl}$ is regular and has special fiber a reduced divisor with normal crossings.
\end{Theorem}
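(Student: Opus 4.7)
My plan is to deduce Theorem \ref{MSemi} from the semistability of the affine chart $\U^{\rm bl}$ (Corollary \ref{SemStabCo}) by exploiting the $\calG$-equivariance of the entire construction. Both regularity and the ``reduced normal crossings special fiber'' property are étale-local on the target, so it suffices to find a $\calG$-invariant open cover of ${\rm M}^{\rm bl}$ on which they hold.

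First I would observe that since $\mathcal{Z}$ is $\calG$-invariant, the $\calG$-action on $\mathcal{M}$ lifts uniquely to ${\rm M}^{\rm bl}$ and makes $r^{\rm bl}$ equivariant. Because blow-ups commute with flat base change, for any $g\in \calG$ the restriction of $r^{\rm bl}$ over $g\cdot\U$ is canonically identified with the blow-up of $g\cdot\U$ along $g\cdot(\U\cap\mathcal{Z})$; by equivariance this is isomorphic, via translation by $g$, to the morphism $\rho:\U^{\rm bl}\to \U$ of \S \ref{BlowUp}. Hence Corollary \ref{SemStabCo} implies that the preimage of every $\calG$-translate of $\U$ in ${\rm M}^{\rm bl}$ is regular with reduced normal crossings special fiber.

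The main obstacle is to verify that the $\calG$-translates of $\U$ cover $\mathcal{M}$. I would argue as follows. The open subscheme $\calG\cdot\U\subset \mathcal{M}$ is $\calG$-invariant and contains a point of $\mathcal{Z}$; since the reductive quotient of $\calG\otimes k$ acts transitively on $\tau^{-1}(t\Lambda)\cong Gr(2,n)_k$, it in fact contains all of $\mathcal{Z}$. If the complement of $\calG\cdot\U$ in $\mathcal{M}$ were non-empty, it would be a non-empty closed $\calG$-invariant subset disjoint from $\mathcal{Z}$, and would therefore contain some closed $\calG$-orbit. However, using properness of $\tau$ together with $\calG$-equivariance, the image under $\tau$ of any closed $\calG$-orbit of $\mathcal{M}$ is a closed $\calG$-orbit of ${\rm M}^{\wedge}\otimes O_F$, which by \cite[\S 4]{P} must equal $\{t\Lambda\}$. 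Hence any closed $\calG$-orbit of $\mathcal{M}$ is contained in $\tau^{-1}(t\Lambda)=\mathcal{Z}$; since $\mathcal{Z}$ is itself a single $\calG$-orbit, it must equal $\mathcal{Z}$---contradicting disjointness.

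Granted the covering claim, ${\rm M}^{\rm bl}$ is covered by opens on which regularity and the reduced normal crossings property hold, which establishes the theorem. The only non-formal step is the orbit-theoretic identification of $\mathcal{Z}$ as the unique closed $\calG$-orbit of $\mathcal{M}$; once this is in place, everything else is a standard descent of the explicit local computation of \S \ref{BlowUp} through the $\calG$-equivariant structure of the splitting local model.
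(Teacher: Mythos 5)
Your reduction to the affine chart via $\calG$-equivariance is exactly the paper's argument (the paper simply asserts that the $\calG$-translates of $\U$ cover $\mathcal{M}$ and then invokes Corollary \ref{SemStabCo}), and your attempt to justify the covering by the unique-closed-orbit principle is the right idea. However, two of the specific claims you rely on are false. The image of $\calG\otimes k$ acting on $t\Lambda\otimes k\cong k^n$ lands in the orthogonal similitude group of the standard symmetric form $(x,y)\mapsto\langle x,ty\rangle$, and an orthogonal group does \emph{not} act transitively on $Gr(2,n)_k$: the rank of the restriction of the form to $\mathcal{F}_0$ is an invariant, so $\mathcal{Z}=\tau^{-1}(t\Lambda)\cong Gr(2,n)_k$ decomposes into three $\calG$-orbits, not one. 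Consequently your step ``$\calG\cdot\U$ contains a point of $\mathcal{Z}$, hence all of $\mathcal{Z}$'' does not follow, and neither does the assertion that $\mathcal{Z}$ is itself a (closed) $\calG$-orbit.

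The argument is repairable along the same lines. The unique closed $\calG$-orbit of $\mathcal{M}$ is not $\mathcal{Z}$ but the locus $\mathcal{Q}(2,n)\subset\mathcal{Z}$ of isotropic planes: your $\tau$-properness argument correctly places any closed orbit inside $\mathcal{Z}$, and inside $Gr(2,n)_k$ the unique closed orbit of the orthogonal group is the isotropic locus, by Witt's theorem together with the closure relations among the rank strata. One must then check that $\U$ actually meets $\mathcal{Q}(2,n)$ --- this is the non-emptiness of $T_0\cap T_2=\{Z_1=0,\ N=0\}$ noted in Remark \ref{Gen.Sign.}, which holds over $k=\overline{\mathbb{F}_p}$ --- so that $\calG\cdot\U$ contains the unique closed orbit and therefore equals $\mathcal{M}$. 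With that substitution your proof closes the gap the paper leaves implicit and otherwise coincides with the paper's.
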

\begin{proof}
From Proposition \ref{2G_translates} we have that the $\mathcal{G}$-translates of $\U$ cover $\mathcal{M}$ and since $r^{\rm bl} $ is $\mathcal{G}$-equivariant we obtain that the $\mathcal{G}$-translates of the open $\U^{\rm bl} = (r^{\rm bl})^{-1}(\U) \subset {\rm M}^{\rm bl}$ cover ${\rm M}^{\rm bl}$. Therefore, it is enough to show the conclusion of the theorem for the blow-up $\U^{\rm bl}$ of $\U$ at the ideal $  (a , \,   b, \,   c)  $ and by Corollary \ref{SemStabCo} the proof of the theorem follows.
\end{proof}
\begin{Remark} {\rm It would be useful to have a simple moduli-theoretic description of the blow-up ${\rm M}^{\rm bl}$ similar in spirit to the description of $\mathcal{M}$ given in \S \ref{Prelim}.} \end{Remark}
 
We just proved that ${\rm M}^{\rm bl} $ has semi-stable reduction, and is therefore flat over $O_F$. Combining all the above we have: 

\[{\rm M}^{\rm bl} \xrightarrow{r^{\rm bl}} \mathcal{M} \xrightarrow{\tau} {\rm M}^{\wedge}\otimes_O O_F \]
which factors through $\Mloc \otimes_O O_F \subset {\rm M}^{\wedge} \otimes_O O_F$ because of flatness; the generic fiber of all of these is the same as we can see from (\ref{GenFib}).
Then, we obtain that  ${\rm M}^{\rm bl} \rightarrow \Mloc \otimes_O O_F$ is a $\mathcal{G} $-equivariant birational projective morphism.

\section{Application to Shimura varieties}\label{Shimura}
\subsection{Unitary Shimura data}
We now discuss some Shimura varieties to
which we can apply these results. We follow \cite[\S 1.1]{PR} for the description of the unitary Shimura varieties; see also \cite[\S 3]{P}. 

Let $F_0$ be an imaginary quadratic field and fix an embedding $\epsilon : F_0 \hookrightarrow \mathbb{C}$. Let $O$ be the ring of integers of $F_0$ and denote by $a \mapsto \overline{a}$ the non
trivial automorphism of $F_0$. Assuming $n>3$, we let $W = F^n_0$ be a $n$-dimensional $F_0$-vector space, and we suppose
that $\phi : W \times W \rightarrow F_0$ is a non-degenerate hermitian form. Set
$W_{\mathbb{C}}= W \otimes_{ F_0,\epsilon} \mathbb{C}$. Choosing a suitable isomorphism $W_{\mathbb{C}} \cong \mathbb{C}^n$ we may write $\phi$ on $ W_{\mathbb{C}}$ in a normal form $\phi (w_1,w_2) = \,  ^t  \bar{w}_1 H w_2$ where
\[
H = \text{diag}(-1, \dots , -1, 1, \dots, 1).
\]
We denote by $s$ (resp. $r$) the number of places, where $-1$, (resp. $1$) appears in $H$. We will say that $\phi$ has signature $(r,s)$. By replacing $\phi$ by $-\phi$ if needed, we can make sure that $s\leq r$ and so we assume that $s \leq r$. Let $J : W_{\mathbb{C}} \rightarrow W_{\mathbb{C}}$ be the
endomorphism given by the matrix $- \sqrt{-1}H$. We have $J^
2 = -id$ and so the endomorphism $J$ gives an $\mathbb{R}$-algebra homomorphism $h_0 : \mathbb{C} \rightarrow \text{End}_{\mathbb{R}}(W\otimes_{\mathbb{Q}} \mathbb{R})$ with $h_0(\sqrt{-1}) = J$ and
hence a complex structure on $W\otimes_{\mathbb{Q}} \mathbb{R} = W_{\mathbb{C}}$. For this complex structure we have
\[
\text{Tr}_{\mathbb{C}}(a; W \otimes_{\mathbb{Q}} \mathbb{R}) = s \cdot \epsilon (a) + r \cdot \bar{\epsilon}(a), \quad  a \in   F_0.
\]
Denote by $E$ the subfield of $\mathbb{C}$ which is generated by the traces above (the ``reflex field"). We have that $E = \mathbb{Q}$ if $r = s$ and $E=F_0$ otherwise. The representation of $F_0$ on $W \otimes_{\mathbb{Q}} \mathbb{R}$ with the
above trace is defined over $E$, i.e there is an $n$-dimensional $E$-vector space $W_0$ on which $F_0$ acts such that
\[
 \text{Tr}_E(a; W_0) = s \cdot a + r \cdot \bar{a}
\]
and such that $W_0 \otimes_E \mathbb{C}$ together with the above $ F_0$-action is isomorphic to $W \otimes_{\mathbb{Q}} \mathbb{R}$ with
the $ F_0$-action induced by $\epsilon :  F_0 \hookrightarrow \mathbb{C}$ and the above complex structure.

Next, fix a non-zero element $a \in  F_0$ with $ \bar{a}= -a$ and set
\[
\psi (x,y) = \text{Tr}_{ F_0/ \mathbb{Q}}(a^{-1}
\phi (x,y))
\]
which is a non-degenerate alternating form $W \otimes_{\mathbb{Q}} W \rightarrow \mathbb{Q}$. This satisfies
\[
\psi (av,w) = \psi  (v, \bar{a}w ), \quad \text{for all} \,\,\, a \in  F_0,\, v,w \in W.
\]
By replacing $a$ by $-a$, we can make sure that the symmetric $\mathbb{R}$-bilinear form on $W_{\mathbb{C}}$ given by $\psi(x,Jy)$ for $x, y \in W_{\mathbb{C}}$ is positive definite. Let $G$ be the reductive group over $\mathbb{Q}$ which is given by
\[
G(\mathbb{Q}) = \{g \in GL_{F_0} (W) \,|\, \psi(gv,gw) = c(g)\psi(v,w), \,  c(g) \in \mathbb{Q}^{\times}\} .
\]
The group $G$ can be identified with the unitary similitude group of the form $\phi$. Set
\[
GU(r,s) := \{A \in GL_n(\mathbb{C}) \, | \,
^t \bar{A}HA ¯ = c(A)H, c(A) \in \mathbb{R}^{\times}\}.
\]
By the above, the embedding $\epsilon :  F_0 \hookrightarrow \mathbb{C}$ induces an isomorphism $G(\mathbb{R}) \cong
GU(r,s)$.
We define a homomorphism $h : \text{Res}_{\mathbb{C}/\mathbb{R}}\mathbb{G}_{m,\mathbb{C}} \rightarrow G_{\mathbb{R}}$ by restricting $h_0$ to $\mathbb{C}^{\times}$. Then $h(a)$ for $a \in \mathbb{R}^{\times}$ acts on $W \otimes_{\mathbb{Q}} \mathbb{R}$ by multiplication by $a$ and $h(\sqrt{-1})$ acts as $J$. Consider $h_{\mathbb{C}}(z, 1) : \mathbb{C}^{\times} \rightarrow G(\mathbb{C}) \cong GL_n(\mathbb{C}) \times \mathbb{C}^{\times}$. Up to conjugation $h_{\mathbb{C}}(z, 1)$ is given by
\[
\mu_{r,s}(z) = (\text{diag}( z^{(s)},1^{(r)}), z);
\] 
this is a cocharacter of $G$ defined over the number field $E$. Denote by $X_h$ 
%= Xr,s 
the conjugation orbit of $h(i)$ under $G(\mathbb{R})$. The pair $(G,h)$ gives rise to a Shimura variety $\text{Sh}(G,h)$ which is defined over the reflex field $E$.

\subsection{Unitary integral models}\label{un.int.}
We continue with the notations and assumptions of the previous paragraph. In particular, we take $G=GU_n$ and $X = X_h$ above that define the unitary similitude Shimura datum $(G, X)$. Assume that $(r,s) =(n-2,2).$

Assume that $p$ is an odd prime number and is ramified in $F_0$. Let $F_1 = F_0 \otimes \mathbb{Q}_p$ and $V = W \otimes_{\mathbb{Q}} \mathbb{Q}_p$. We fix a square root $\pi$ of $p$ and  we set $k = \overline{\mathbb{F}_p}$. In addition, we assume that the hermitian form $\phi$ on $ V$ is split. This means that there exists a basis $e_1,\dots , e_n$ of $ V$ such that $\phi(e_i, e_{n+1-j}) = \delta_{ij}$ for $ i, j \in \{1,\dots ,n\}.$ We denote by $\Lambda$ the standard lattice $O^n \otimes_{\mathbb{Z}} \mathbb{Z}_p $ in $V$. Denote by $K$ the stabilizer of $\Lambda$ in $G(\mathbb{Q}_p)$.

We let $\mathcal{L}$ be the
self-dual multichain consisting of $\{\pi^k \Lambda\}_{k \in \mathbb{Z}}$. Here $\mathcal{G} = \underline{{\rm Aut}}(\mathcal{L})$ is the group scheme over $\mathbb{Z}_p$ with $K = \mathcal{G}(\mathbb{Z}_p)$ the subgroup of $G(\mathbb{Q}_p)$ fixing the lattice chain $\mathcal{L}$. Denote by $K^\circ$ the neutral component of $K$. As in \S 2,  when $n$ is odd $K = K^\circ$ and when $n$ is even $K/ K^\circ \simeq \mathbb{Z}/2\mathbb{Z} $.

Choose also a sufficiently small compact open subgroup $K^p$ of the prime-to-$p$ finite adelic points $G({\mathbb A}_{f}^p)$ of $G$ and set $\mathbf{K}=K^pK$ and $\mathbf{K}'=K^pK^\circ $. As was observed in \cite[\S 1.3]{PR}, the Shimura varieties ${\rm Sh}_{\mathbf{K}'}(G, X)$ and ${\rm Sh}_{\mathbf{K}}(G, X)$ have isomorphic geometric connected components. Therefore, from the point of view of constructing reasonable integral models, we may restrict our attention to ${\rm Sh}_{\mathbf{K}}(G, X)$; since $K$ corresponds to a lattice set stabilizer, this Shimura variety is given by a simpler moduli problem.
 The Shimura variety  ${\rm Sh}_{\mathbf{K}}(G, X)$ with complex points
 \[
 {\rm Sh}_{\mathbf{K}}(G, X)(\mathbb{C})=G(\mathbb{Q})\backslash X\times G({\mathbb A}_{f})/\mathbf{K}
 \]
is of PEL type and has a canonical model over the reflex field $E$. We set $\mathcal{O} = O_{E_v}$ where $v$ the unique prime ideal of $E$ above $(p)$.

We consider the moduli functor $\mathcal{A}^{\rm naive}_{\mathbf{K}}$ over $\Spec \mathcal{O} $ given in \cite[Definition 6.9]{RZbook}:\\
A point of $\mathcal{A}^{\rm naive}_{\mathbf{K}}$ with values in the $\Spec \mathcal{O} $-scheme $S$ is the isomorphism class of the following set of data $(A,\bar{\lambda}, \bar{\eta})$:
\begin{enumerate}
    \item An $\mathcal{L}$-set of abelian varieties $A = \{A_{\Lambda}\}$.
    \item A $\mathbb{Q}$-homogeneous principal polarization $\bar{\lambda}$ of the $\mathcal{L}$-set $A$.
    \item A $K^p$-level structure
    \[
\bar{\eta} : H_1 (A, {\mathbb A}_{f}^p) \simeq W \otimes  {\mathbb A}_{f}^p \, \text{ mod} \, K^p
    \]
which respects the bilinear forms on both sides up to a constant in $({\mathbb A}_{f}^p)^{\times}$ (see loc. cit. for
details).

The set $A$ should satisfy the determinant condition (i) of loc. cit.
\end{enumerate}

For the definitions of the terms employed here we refer to loc.cit., 6.3–6.8 and \cite[\S 3]{P}. The functor $\mathcal{A}^{\rm naive}_{\mathbf{K}}$ is representable by a quasi-projective scheme over $\mathcal{O}$. Since the Hasse principle is satisfied for the unitary group, we can see as in loc. cit. that there is a natural isomorphism
\[
\mathcal{A}^{\rm naive}_{\mathbf{K}} \otimes_{\calO} E_v = {\rm Sh}_{\mathbf{K}}(G, X)\otimes_{E} E_v.
\]

As is explained in \cite{RZbook} and \cite{P} the naive local model ${\rm M}^{\rm naive}$ is connected to the moduli scheme $\mathcal{A}^{\rm naive}_{\mathbf{K}}$ via the local model diagram 
\[
\mathcal{A}^{\rm naive}_{\mathbf{K}} \ \xleftarrow{\psi_1} \Tilde{\mathcal{A}}^{\rm naive}_{\mathbf{K}} \xrightarrow{\psi_2} {\rm M}^{\rm naive}
\]
where the morphism $\psi_1$ is a $\mathcal{G}$-torsor and $\psi_2$ is a smooth and $\mathcal{G}$-equivariant morphism. Therefore, there is a relatively representable smooth
morphism
 \[
 \mathcal{A}^{\rm naive}_{\mathbf{K}} \to [\mathcal{G} \backslash  {\rm M}^{\rm naive}]
 \]
where the target is the quotient algebraic stack.

As we mentioned in \S \ref{Prelim}, the scheme ${\rm M}^{\rm naive}$ is never flat and by the above, the same is true for $\mathcal{A}^{\rm naive}_{\mathbf{K}}$. Denote by $ \mathcal{A}^{\rm flat}_{\mathbf{K}} $ the flat closure of ${\rm Sh}_{\mathbf{K}}(G, X)\otimes_{E} E_v$ in $ \mathcal{A}^{\rm naive}_{\mathbf{K}}$. Recall from \S \ref{Prelim} that the flat closure of $ {\rm M}^{\rm naive} \otimes_{\mathcal{O}} E_v$ in ${\rm M}^{\rm naive}$ is by definition the local model $\Mloc $. By the above we can see, as in \cite{PR}, that there is a relatively representable smooth morphism of relative dimension
${\rm dim} (G)$,
\[
\mathcal{A}^{\rm flat}_{\mathbf{K}} \to [\mathcal{G} \backslash \Mloc].
 \]
This of course implies imply that $\mathcal{A}^{\rm flat}_{\mathbf{K}}$ is \'etale locally isomorphic to the local model $\Mloc$.

One can now consider a variation of the moduli of abelian schemes $\mathcal{A}^{\rm spl}_{\mathbf{K}}$ where we add in the moduli problem an additional subspace in the Hodge filtration $ {\rm Fil}^0 (A) \subset H_{dR}^1(A)$ of the universal abelian variety $A$ (see \cite[\S 6.3]{H} for more details) with certain conditions to imitate the definition of the splitting local model $\mathcal{M}$. $\mathcal{A}^{\rm spl}_{\mathbf{K}}$ associates to an $O_{F_1}$-scheme $S$ the set of isomorphism classes of objects $(A,\bar{\lambda}, \bar{\eta},\mathscr{F}_0) $. Here $(A,\bar{\lambda}, \bar{\eta})$ is an object of  $\mathcal{A}^{\rm naive}_{\mathbf{K}}(S).$ Set $\mathscr{F}_1 := {\rm Fil}^0 (A) $. The final ingredient $\mathscr{F}_0$ of an object of $\mathcal{A}^{\rm spl}_{\mathbf{K}}$ is the subspace $ \mathscr{F}_0 \subset \mathscr{F}_1 \subset H_{dR}^1(A) $ of rank $s$ which satisfies the following conditions: 
\[
 (t + \pi) \mathscr{F}_1 \subset \mathscr{F}_0, \quad (t - \pi)\mathscr{F}_0 = (0).
\]
There is a forgetful morphism
\[
\tau :   \mathcal{A}^{\rm spl}_{\mathbf{K}} \longrightarrow \mathcal{A}^{\rm naive}_{\mathbf{K}}\otimes_{\mathcal{O}} O_{F_1}
\]
defined by $(A,\bar{\lambda}, \bar{\eta},\mathscr{F}_0) \mapsto (A,\bar{\lambda}, \bar{\eta}) $. Moreover, $\mathcal{A}^{\rm spl}_{\mathbf{K}}$ has the same \'etale local structure as $\mathcal{M}$; it is a ``linear modification" of $\mathcal{A}^{\rm naive}_{\mathbf{K}}\otimes_{\mathcal{O}} O_{F_1}$ in the sense of \cite[\S 2]{P} (see also \cite[\S 15]{PR2}). Also we want to mention that under the local model diagram the subspace $\mathscr{F}_1$ corresponds to $\mathcal{F}_1$ of $(\mathcal{F}_0, \mathcal{F}_1) \in \mathcal{M}$.

 \begin{Theorem}\label{RegLM}
     
 For every $K^p$ as above, there is a
 scheme $\mathcal{A}^{\rm bl}_{\mathbf{K}}$, flat over $\Spec(O_{F_1})$, 
 with
 \[
\mathcal{A}^{\rm bl}_{\mathbf{K}}\otimes_{O_{F_1}} F_1={\rm Sh}_{\mathbf{K}}(G, X)\otimes_{E} F_1,
 \]
 and which supports a local model diagram
  \begin{equation}\label{LMdiagramReg}
\begin{tikzcd}
&\wti{\mathcal{A}}^{\rm bl}_{\mathbf{K}}(G, X)\arrow[dl, "\pi^{\rm reg}_K"']\arrow[dr, "q^{\rm reg}_K"]  & \\
\mathcal{A}^{\rm bl}_{\mathbf{K}}  &&  {\rm M}^{\rm bl}
\end{tikzcd}
\end{equation}
such that:
\begin{itemize}
\item[a)] $\pi^{\rm reg}_{\mathbf{K}}$ is a $\mathcal{G}$-torsor for the parahoric group scheme $\calG$ that corresponds to $K_p$,

\item[b)] $q^{\rm reg}_{\mathbf{K}}$ is smooth and $\calG$-equivariant.

\item[c)] $\mathcal{A}^{\rm bl}_{\mathbf{K}}$ is regular and has special fiber which is a reduced divisor with
normal crossings. 
\end{itemize}
 \end{Theorem}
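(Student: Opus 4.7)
The plan is to define $\mathcal{A}^{\rm bl}_{\mathbf{K}}$ as the blow-up of $\mathcal{A}^{\rm spl}_{\mathbf{K}}$ along the $\mathcal{G}$-invariant smooth closed subscheme $\mathcal{Z}_{\mathbf{K}} \subset \mathcal{A}^{\rm spl}_{\mathbf{K}}$ cut out by the condition that the Hodge filtration $F_1 = {\rm Fil}^0(A)$ of the universal abelian scheme is annihilated by the action of the uniformizer $\pi$. Under the local model diagram for $\mathcal{A}^{\rm spl}_{\mathbf{K}}$, this subscheme corresponds precisely to the preimage $\tau_1^{-1}(*) \subset \mathcal{M}$ (which is $\mathcal{Z} = \tau^{-1}(t\Lambda)$ in the notation of \S\ref{Resol}), so $\mathcal{Z}_{\mathbf{K}}$ is smooth and $\mathcal{G}$-invariant by the results already established for the local model.

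Next I construct the local model diagram in (\ref{LMdiagramReg}). Let $\wti{\mathcal{A}}^{\rm spl}_{\mathbf{K}} \xrightarrow{\pi^{\rm spl}_{\mathbf{K}}} \mathcal{A}^{\rm spl}_{\mathbf{K}}$ and $\wti{\mathcal{A}}^{\rm spl}_{\mathbf{K}} \xrightarrow{q^{\rm spl}_{\mathbf{K}}} \mathcal{M}$ be the local model diagram for $\mathcal{A}^{\rm spl}_{\mathbf{K}}$, where $\pi^{\rm spl}_{\mathbf{K}}$ is a $\mathcal{G}$-torsor and $q^{\rm spl}_{\mathbf{K}}$ is smooth and $\mathcal{G}$-equivariant. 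Define $\wti{\mathcal{A}}^{\rm bl}_{\mathbf{K}}$ as the pullback of ${\rm M}^{\rm bl} \to \mathcal{M}$ along $q^{\rm spl}_{\mathbf{K}}$; equivalently, it is the blow-up of $\wti{\mathcal{A}}^{\rm spl}_{\mathbf{K}}$ along $(q^{\rm spl}_{\mathbf{K}})^{-1}(\mathcal{Z})$. Since blow-ups commute with flat base change, and since $\pi^{\rm spl}_{\mathbf{K}}$ is (faithfully) flat, $\pi^{\rm spl}_{\mathbf{K}}$ descends $(\pi^{\rm spl}_{\mathbf{K}})^{-1}(\mathcal{Z}_{\mathbf{K}}) = (q^{\rm spl}_{\mathbf{K}})^{-1}(\mathcal{Z})$-blow-up to the blow-up of $\mathcal{A}^{\rm spl}_{\mathbf{K}}$ along $\mathcal{Z}_{\mathbf{K}}$. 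This yields $\pi^{\rm reg}_{\mathbf{K}} : \wti{\mathcal{A}}^{\rm bl}_{\mathbf{K}} \to \mathcal{A}^{\rm bl}_{\mathbf{K}}$ as a $\mathcal{G}$-torsor, and $q^{\rm reg}_{\mathbf{K}} : \wti{\mathcal{A}}^{\rm bl}_{\mathbf{K}} \to {\rm M}^{\rm bl}$ as the base change of a smooth $\mathcal{G}$-equivariant morphism, hence itself smooth and $\mathcal{G}$-equivariant. This gives parts (a) and (b).

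For part (c), the key point is that blowing up commutes with \'etale localization, so the diagram above shows that $\mathcal{A}^{\rm bl}_{\mathbf{K}}$ is \'etale locally isomorphic to ${\rm M}^{\rm bl}$. Since regularity and having special fiber a reduced normal crossings divisor are \'etale local properties, Theorem \ref{MSemi} transfers to give the same properties for $\mathcal{A}^{\rm bl}_{\mathbf{K}}$. Flatness over $O_F$ likewise follows from the corresponding property of ${\rm M}^{\rm bl}$ established in \S\ref{Resol}. For the identification of the generic fiber, the blow-up locus $\mathcal{Z}_{\mathbf{K}}$ is supported in the special fiber (since the condition $\pi \cdot F_1 = 0$ is automatic only in characteristic dividing $p$ in the relevant sense from Proposition \ref{notflat} and \S\ref{Prelim}), so $\mathcal{A}^{\rm bl}_{\mathbf{K}} \otimes_{O_F} F \cong \mathcal{A}^{\rm spl}_{\mathbf{K}} \otimes_{O_F} F$, which in turn equals $\mathcal{A}^{\rm naive}_{\mathbf{K}} \otimes_{\mathcal{O}} F = {\rm Sh}_{\mathbf{K}}(G, X) \otimes_{E} F$ since $\tau$ is an isomorphism on generic fibers.

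The main technical obstacle is verifying that the blow-up descends along the $\mathcal{G}$-torsor $\pi^{\rm spl}_{\mathbf{K}}$ in a $\mathcal{G}$-equivariant way, so that $\mathcal{A}^{\rm bl}_{\mathbf{K}}$ is well-defined as the blow-up of $\mathcal{A}^{\rm spl}_{\mathbf{K}}$ along the $\mathcal{G}$-invariant subscheme corresponding to $\mathcal{Z}$. This reduces to showing that the subscheme $(q^{\rm spl}_{\mathbf{K}})^{-1}(\mathcal{Z})$ is the pullback of a well-defined closed subscheme of $\mathcal{A}^{\rm spl}_{\mathbf{K}}$ under $\pi^{\rm spl}_{\mathbf{K}}$; this is ensured by the $\mathcal{G}$-equivariance of $q^{\rm spl}_{\mathbf{K}}$ and the $\mathcal{G}$-invariance of $\mathcal{Z} \subset \mathcal{M}$, together with the general fact (e.g.~\cite[\S 2]{P}) that $\mathcal{G}$-equivariant constructions in local models descend to constructions on moduli spaces with a local model diagram. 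Once this descent is in place, the rest of the argument is a formal consequence of the previous sections.
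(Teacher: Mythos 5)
Your proposal is correct and follows essentially the same route as the paper: both construct the fiber product $\wti{\mathcal{A}}^{\rm spl}_{\mathbf{K}}\times_{\mathcal{M}}{\rm M}^{\rm bl}$, use the compatibility of blow-ups with flat/\'etale base change to identify $\mathcal{A}^{\rm bl}_{\mathbf{K}}$ with the blow-up of $\mathcal{A}^{\rm spl}_{\mathbf{K}}$ along the locus where $tF_1=0$, and transfer regularity and the normal-crossings property from Theorem \ref{MSemi} via the resulting \'etale-local isomorphism. The only cosmetic difference is the order of construction: you define $\mathcal{A}^{\rm bl}_{\mathbf{K}}$ directly as a blow-up and obtain the torsor by faithfully flat descent, whereas the paper first forms the $\mathcal{G}$-quotient of the fiber product (invoking projectivity of ${\rm M}^{\rm bl}\to\mathcal{M}$ and the linear-modification formalism of \cite[\S 2]{P} for representability) and then identifies it with the blow-up.
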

 \begin{proof}  
 By the above, we have
 \begin{equation}\label{LMdiagramLastChapter}
\begin{tikzcd}
&\wti{\mathcal{A}}^{\rm spl}_{\mathbf{K}}\arrow[dl, "\pi_{\mathbf{K}}"']\arrow[dr, "q_{\mathbf{K}}"]  & \\
\mathcal{A}^{\rm spl}_{\mathbf{K}}  &&  \mathcal{M}
\end{tikzcd}
\end{equation}
with $\pi_{\mathbf{K}}$ a $\mathcal{G}$-torsor and $q_{\mathbf{K}}$ smooth and $\mathcal{G}$-equivariant. We set
\[
\wti{\mathcal{A}}^{\rm bl}_{\mathbf{K}}=\wti{\mathcal{A}}^{\rm spl}_{\mathbf{K}}\times_{\mathcal{M}}{\rm M}^{\rm bl}
\]
which carries a diagonal $\mathcal{G}$-action. Since ${\rm M}^{\rm bl} \longrightarrow \mathcal{M}$ is given by a blow-up, is projective, and we can see  (\cite[\S 2]{P}) that the quotient
\[
\pi^{\rm reg}_K: \wti{\mathcal{A}}^{\rm bl}_{\mathbf{K}} \longrightarrow \mathcal{A}^{\rm bl}_{\mathbf{K}}:=\calG\backslash \wti{\mathcal{A} }^{\rm bl}_K(G, X)
\]
is represented by a scheme and gives a $\calG$-torsor. (This is an example of a linear modification, see \cite[\S 2]{P}.) In fact, since blowing-up commutes with \'etale localization, $ \mathcal{A}^{\rm bl}_{\mathbf{K}}$ is the blow-up of $\mathcal{A}^{\rm spl}_{\mathbf{K}} $ along the locus of its special fiber where $t \mathscr{F}_1=0$. The projection gives a smooth $\calG$-morphism
\[
q^{\rm reg}_{\mathbf{K}}: \wti{\mathcal{A}}^{\rm bl}_{\mathbf{K}} \longrightarrow {\rm M}^{\rm bl}
\]
which completes the local model diagram. Property (c) follows from Theorem \ref{MSemi} and properties (a) and (b) which imply that $ \mathcal{A}^{\rm bl}_{\mathbf{K}}$ and
$ {\rm M}^{\rm bl}$ are locally isomorphic for the \'etale topology. 
\end{proof} 
\begin{Corollary}
$\mathcal{A}^{\rm bl}_{\mathbf{K}}$ is the blow-up of $\mathcal{A}^{\rm spl}_{\mathbf{K}} $ along the locus of its special fiber where the deRham filtration $\mathscr{F}_1 = {\rm Fil}^0 (A) $ is annihilated by the action of the uniformizer $\pi$. 
\end{Corollary}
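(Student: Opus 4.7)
The statement is essentially a restatement of what is established during the proof of Theorem~\ref{RegLM}, so the plan is to extract it cleanly by identifying the center of the blow-up in deRham-theoretic terms. Recall that by construction $\wti{\mathcal{A}}^{\rm bl}_{\mathbf{K}}=\wti{\mathcal{A}}^{\rm spl}_{\mathbf{K}}\times_{\mathcal{M}}{\rm M}^{\rm bl}$ with $\mathcal{A}^{\rm bl}_{\mathbf{K}}=\calG\backslash \wti{\mathcal{A}}^{\rm bl}_{\mathbf{K}}$, and ${\rm M}^{\rm bl}\to\mathcal{M}$ is by definition the blow-up of $\mathcal{M}$ along the smooth $\calG$-invariant subscheme $\mathcal{Z}=\tau^{-1}(t\Lambda)\subset \mathcal{M}\otimes_{O_F} k$.

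The first step is to translate $\mathcal{Z}$ into a condition on the universal Hodge filtration. Via the local model diagram (\ref{LMdiagramLastChapter}), the subspace $F_1={\rm Fil}^0(A)$ corresponds to $\mathcal{F}_1$. On the special fiber of $\mathcal{M}$ one has $t^2\equiv pu\equiv 0$, so the action of $t$ on $\Lambda\otimes k$ has kernel equal to $t(\Lambda\otimes k)$, a rank-$n$ summand. Since $\mathcal{F}_1$ is itself a rank-$n$ totally isotropic summand, the condition $t\mathcal{F}_1=0$ forces $\mathcal{F}_1\subseteq t\Lambda$ and hence $\mathcal{F}_1=t\Lambda$ by a rank count. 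Thus $\mathcal{Z}$ is exactly the locus $\{t\mathcal{F}_1=0\}$ in the special fiber, and under the local model diagram this corresponds to the closed subscheme $Z\subset \mathcal{A}^{\rm spl}_{\mathbf{K}}\otimes k$ cut out by $tF_1=0$.

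The second step is to commute the blow-up past $q_{\mathbf{K}}$ and $\pi_{\mathbf{K}}$. Since blowing up is compatible with flat base change and $q_{\mathbf{K}}$ is smooth, the fibre product defining $\wti{\mathcal{A}}^{\rm bl}_{\mathbf{K}}$ agrees with the blow-up of $\wti{\mathcal{A}}^{\rm spl}_{\mathbf{K}}$ along $q_{\mathbf{K}}^{-1}(\mathcal{Z})$. Since $\mathcal{Z}$ is $\calG$-invariant and $q_{\mathbf{K}}$ is $\calG$-equivariant, this center is $\calG$-invariant; because $\pi_{\mathbf{K}}$ is a $\calG$-torsor for the smooth group scheme $\calG$, étale-local descent (the ``linear modification" mechanism of \cite[\S 2]{P}) shows that the quotient $\mathcal{A}^{\rm bl}_{\mathbf{K}}=\calG\backslash \wti{\mathcal{A}}^{\rm bl}_{\mathbf{K}}$ is the blow-up of $\mathcal{A}^{\rm spl}_{\mathbf{K}}$ along the descended center $Z$.

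The only nontrivial point is the first identification $\mathcal{Z}=\{t\mathcal{F}_1=0\}$, and this is immediate from the rank count above; the commutation of blow-ups with smooth pullback and torsor descent is routine and is, in any case, already invoked in the proof of Theorem~\ref{RegLM}. Combining the two steps gives the claim.
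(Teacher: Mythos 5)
Your proposal is correct and follows essentially the same route as the paper: the paper's proof of this corollary is simply a pointer back to the proof of Theorem~\ref{RegLM}, where the identification of $\mathcal{A}^{\rm bl}_{\mathbf{K}}$ with the blow-up along $\{tF_1=0\}$ is asserted via the compatibility of blowing up with étale localization through the local model diagram. You have merely filled in the details the paper leaves implicit — the rank-count identification $\mathcal{Z}=\tau^{-1}(t\Lambda)=\{t\mathcal{F}_1=0\}$ and the commutation of the blow-up with the smooth morphism $q_{\mathbf{K}}$ and descent along the $\calG$-torsor $\pi_{\mathbf{K}}$ — and these are accurate.
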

\begin{proof}
    It follows from the proof of the above theorem. 
\end{proof}
\begin{Remarks}
{\rm 
\begin{enumerate}
    \item From the above discussion, we can obtain a semi-stable integral model for the Shimura variety ${\rm Sh}_{\mathbf{K}'}(G, X)$ where $\mathbf{K}'=K^pK^\circ $. In this case, the corresponding local models $\Mloc$ of ${\rm Sh}_{\mathbf{K}'}(G, X)$ agree with the Pappas-Zhu local models $ \mathbb{M}_{K^\circ}(G,\mu_{r,s})$ for the local model triples $(G,\{\mu_{r,s}\},K^\circ)$. (See \cite[Theorem 1.2]{PZ} and \cite[\S 8]{PZ} for more details.)
    \item Similar results can be obtained for corresponding Rapoport-Zink formal schemes. (See \cite[\S 4]{HPR} for an example of this parallel treatment.)
\end{enumerate}
}
\end{Remarks}


\begin{thebibliography}{00}

\bibitem{ACGH} E. Arbarello , M. Cornalba , P. A. Griffiths , J. Harris, \textit{Geometry of Algebraic Curves}, Vol. I, Springer-Verlag, New York, 1985.

\bibitem{A} K. Arzdorf, \textit{On local models with special parahoric level structure}, Michigan Math. J. 58 (2009), 683-710.

\bibitem{BDE} D. Barbasch, S. Evens, \textit{K-orbits on Grassmannians and a PRV conjecture for
real groups}, Journal of Algebra 167.2 (1994), 258-283.

\bibitem{BHKR} J. Bruinier, B. Howard, S. Kudla, M. Rapoport, T. Yang, \textit{Modularity of generating series of divisors on unitary Shimura varieties}, Asterisque, 421 (2020), 8–125.

\bibitem{Ge} A. Genestier, \textit{Un modèle semi-stable de la variété de Siegel de genre 3
avec structures de niveau de type $\Gamma_0(p)$}, Compositio Math. 123 (2000), 303–328.

\bibitem{Fa} G. Faltings, \textit{Explicit resolution of local singularities of moduli-spaces}, J. Reine Angew. Math. 483 (1997), 183-196.

\bibitem{H} T. Haines, \textit{Introduction to Shimura varieties with bad reduction of parahoric type}, in
Harmonic analysis, the trace formula, and Shimura varieties, Clay Math. Proc., vol. 4, American Mathematical Society, Providence, RI, 2005, p. 583–642.

\bibitem{Ha} U.T. Hartl, \textit{Semi-stability and base change}, Arch. Math. 77 (2001), 215–221.

\bibitem{HLSY} Q. He, C. Li, Y. Shi, T. Yang, \textit{A proof of the Kudla-Rapoport conjecture for Krämer models}, preprint (2022), arXiv:2208.07988.

\bibitem{HPR} X. He, G. Pappas, M. Rapoport, \textit{Good and semi-stable reductions of Shimura varieties}, Journal de
l’Ecole polytechnique - Math., Vol. 7 (2020), p. 497–571.


\bibitem{deJ}J. de Jong, \textit{The moduli spaces of principally polarized abelian varieties
with $\Gamma_0(p)$-level structure}, J. Alg. Geom. 2 (1993), 667–688.

 

\bibitem{Kr} N. Krämer, \textit{Local models for ramified unitary groups}, Abh. Math. Sem. Univ. Hamburg 73
(2003), p. 67–80.


\bibitem{P} G. Pappas, \textit{On the arithmetic moduli schemes of PEL Shimura varieties}, J. Alg. Geom. 9 (2000),
577–605.

\bibitem{PappasLetter} G. Pappas, \textit{Letter to M. Rapoport}, Sept. 2000.

\bibitem{PR2} G. Pappas, M. Rapoport, \textit{Local models in the ramified case. II. Splitting
models}, Duke Math. Journal 127 (2005), 193-250.

\bibitem{PR} G. Pappas, M. Rapoport, \textit{Local models in the ramified case. III. Unitary
groups}, J. Inst. Math. Jussieu 8 (2009), no. 3, 507-564.

\bibitem{PR3} G. Pappas, M. Rapoport, \textit{Twisted loop groups and their affine 
flag varieties}, with an appendix by T. Haines and Rapoport. Advances in Math. 219 (2008), no. 1, 118-198.

\bibitem{PRS} G. Pappas, M. Rapoport, B. Smithling, \textit{Local models of Shimura varieties, I. Geometry and
combinatorics}, in Handbook of moduli. Vol. III, Adv. Lect. Math. (ALM), vol. 26, Int. Press,
Somerville, MA, 2013, p. 135–217.

\bibitem{PaZa} G. Pappas, I. Zachos, \textit{Regular integral models for Shimura varieties of orthogonal type}, Compos. Math. 158(4) (2022), 831-867. 

\bibitem{PZ} G. Pappas, X. Zhu, \textit{Local models of Shimura varieties and a conjecture of Kottwitz}, Invent.
math. 194 (2013), 147–254.

\bibitem  {RZbook} M.\ Rapoport, T.\ Zink,  \textit{Period spaces for $p$--divisible groups}.
Ann.\ of Math. Studies {141}, Princeton University Press, Princeton, NJ, 1996. 

\bibitem{Sm1} B. Smithling, \textit{Topological 
flatness of local models for ramified unitary
groups. I. The odd dimensional case.} Adv. Math. 226 (2011), 3160-3190.



\bibitem{Tits} J.~Tits,  \textit{Reductive groups over local fields}, Automorphic forms, representations and L-functions, Proc. Sympos. Pure Math., XXXIII,  part 1, 29--69, Amer. Math. Soc., Providence, R.I., 1979.

\bibitem{Zhang} W. Zhang, \textit{Weil representation and Arithmetic Fundamental Lemma}, Ann. Math. (2) 193(3), 863–978 (2021).

\end{thebibliography}
\end{document}